\newcommand{\Z}{\mathbb{Z}}
\newcommand{\C}{\mathbb{C}}
\newcommand{\F}{\mathbb{F}}
\renewcommand{\P}{\mathbb{P}} 
\newcommand{\G}{\Gamma} 
\newcommand{\T}{\mathbf{T}} 
\newcommand{\U}{\mathbf{U}} 
\newcommand{\W}{\mathbf{W}} 
\newcommand{\D}{\mathcal{D}}
\newcommand{\Tr}{\boldsymbol{Tr}} 
\newcommand{\g}{\gamma}
\renewcommand{\a}{\alpha}
\renewcommand{\b}{\beta}
\newcommand{\m}{\mathfrak{m}}
\newcommand{\n}{\mathfrak{n}}
\renewcommand{\d}{\mathfrak{d}}
\newcommand{\p}{\mathfrak{p}}
\newcommand{\mf}{\,|_{k,m}}
\newtheorem{thm}{Theorem}[section]
\newtheorem{prop}[thm]{Proposition}
\newtheorem{lem}[thm]{Lemma}
\newtheorem{cor}[thm]{Corollary}
\newtheorem{defin}[thm]{Definition}
\newtheorem{rem}[thm]{Remark}
\newtheorem{rems}[thm]{Remarks}
\renewcommand{\matrix}[4]{\left(\begin{array}{cc} {#1} & {#2} \\ {#3} & {#4} \end{array}\right)}
\newcommand{\smatrix}[4]{\left(\begin{smallmatrix} {#1} & {#2} \\ {#3} & {#4} \end{smallmatrix}\right)}
\title[Atkin--Lehner theory]{Atkin--Lehner theory for Drinfeld modular forms and applications}
\author{Maria Valentino}
\address{{\sc Maria Valentino}: Universit\`a della Calabria\\
Dipartimento di Matematica e Informatica\\
Ponte P. Bucci, Cubo 30B
87036 Rende (CS), Italy}
\email{maria.valentino@unical.it}
\begin{document}

\maketitle

\begin{abstract}
The present paper deals with Atkin--Lehner theory for Drinfeld modular forms.  We provide  an equivalent definition of $\p$-newforms (which makes computations easier)  and commutativity results between Hecke operators and Atkin--Lehner involutions.  As applications we show a criterion for a direct sum decomposition of cusp forms, we exibit $\p$-newforms
arising from lower levels and we provide $\p$-adic Drinfeld modular forms of level greater than 1. 
\end{abstract}

\medskip
\noindent {\bf MSC2020}: 11F52, 11F25.\\

\noindent {\bf Keywords}: {Drinfeld modular forms, Atkin--Lehner involutions, $\p$-newforms, $\p$-adic modular forms.}

\section{Introduction}
Let $p\in\Z$ be a prime, $q>1$ a $p$-power and fix integers $k>0$ and $m\in \{ 0,\cdots, q-2  \}$.
Let $S_{k,m}(\G_0(\n))$ be the space of Drinfeld cusp forms of weight $k$, type $m$ and level $\n$, where $\n=(\nu)$ is an ideal of $\F_q[t]$ with
monic generator $\nu\in \F_q[t]$.

When $\nu=t$, the structure of  $S_{k,m}(\G_0(t))$ has been investigated in the papers \cite{BV2} and \cite{BV3}. In particular, among other things, authors defined {\em oldforms}, i.e. cusp forms coming from the lower level
$GL_2(A)$, and {\em $(t)$-newforms} (see \cite[Section 3]{BV2}). Moreover, it was conjectured, and proved in some special cases, that  $S_{k,m}(\G_0(t))$ is direct
sum of oldforms and $(t)$-newforms and  a criterion for the direct sum decomposition was provided in \cite[Theorem 5.1]{BV3}. 

The above results were generalized in \cite{BV4}.  Specifically for the cases: 
\begin{itemize}
\item[\em a)] $\n$  is prime; 
\item[\em b)] $\n=\m\p$ where $\p$ is prime and $(\m,\p)=(1)$.
\end{itemize}
In the fist case, most formulas and definitions were straightforward generalization of the ones in \cite{BV2}. In the latter one, computations were more involved and
the criterion for the direct sum decomposition seemed to be only a sufficient condition.

In the present paper we shall exploit the so called {\em partial Atkin--Lehner involutions}
 (see Section \ref{AtLeInv} for more details) to twist trace maps and study the relations among these twists. As a consequence we achieve two main goals:
 \begin{itemize}
 \item[1.] {we supply an equivalent definition of $\p$-newforms of level $\m\p$ with $(\m,\p)=(1)$ that can fully generalize
 results in \cite{BV2} and \cite{BV3} and improve those in \cite{BV4};} 
 \item[2.] {we shed some light on the interaction between Hecke and Atkin--Lehner actions proving that they commute.}
 \end{itemize}

The paper is organized as follows.

In Section \ref{SecSttNot} we recall basic facts  and definitions on Drinfeld modular forms of
level $\n$, their Fourier expansions and Hecke operators $\T_\p$. In  Section \ref{AtLeInv} we provide a description of  Atkin--Lehner involutions $\W_\d^{\n}$, for $\d || \n$ (i.e. $\d$ divides $\n$ and $(\d,\frac{\n}{\d})=(1)$), and their main properties.

Section \ref{SecTwist} is devoted to the study of trace maps twisted by Atkin--Lehner involutions. We first recall from \cite{BV4} the 
definition of $\p$-oldforms. After that, we prove 

\begin{thm}[Theorem \ref{ThmComm}]
Let $\m,\p\subset  A$ be ideals  such that $\p\nmid \m$ and $\p$ is prime. Let $f\in S_{k,m}(\G_0(\m))$ and fix an ideal $\d$ of $A$ such that $\d ||\m$. Then
the Hecke operator $\T_\p$ commutes with the Atkin--Lehner involution $\W_\d^\m$.
\end{thm}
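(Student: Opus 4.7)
The plan is to reduce the identity $\W_\d^\m \T_\p = \T_\p \W_\d^\m$ to a single statement about double cosets in $GL_2(\F_q(t))$. Recall that $\T_\p$ is realized by a decomposition
\[
\G_0(\m)\, \b_\p\, \G_0(\m) = \bigsqcup_{i} \G_0(\m)\, \a_i, \qquad \b_\p = \smatrix{\pi_\p}{0}{0}{1},
\]
where $\pi_\p$ is the monic generator of $\p$, so that, up to the usual normalization, $f\mf \T_\p = \sum_i f\mf \a_i$. Let $W$ denote a fixed matrix representative of $\W_\d^\m$ in the normalizer of $\G_0(\m)$ inside $GL_2(\F_q(t))$. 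Expanding both sides of the target identity,
\[
(f\mf W)\mf \T_\p = \sum_i f\mf (W\a_i), \qquad (f\mf \T_\p)\mf W = \sum_i f\mf (\a_i W),
\]
so commutativity becomes the equality of the multisets $\{\G_0(\m) W\a_i\}_i$ and $\{\G_0(\m) \a_i W\}_i$ of left cosets.

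Using $W \G_0(\m) W^{-1} = \G_0(\m)$, the second multiset can be rewritten as $\{\G_0(\m) W \cdot (W^{-1}\a_i W)\}_i$. After right-cancelling $W$, the desired matching becomes the statement that $\{W^{-1}\a_i W\}_i$ is itself a full system of left-coset representatives of $\G_0(\m)\, \b_\p\, \G_0(\m)$ modulo $\G_0(\m)$. Since conjugation by $W$ is a bijection on $GL_2(\F_q(t))$ that preserves $\G_0(\m)$, this in turn reduces to the single inclusion
\[
W^{-1} \b_\p\, W \in \G_0(\m)\, \b_\p\, \G_0(\m).
\]

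I would prove this inclusion by a direct matrix computation. Writing $W = \smatrix{a\pi_\d}{b}{c\pi_\m}{d\pi_\d}$ with the usual determinant relation $ad\pi_\d - bc\,\pi_{\m/\d} = 1$ (so that $\det W$ is a unit times $\pi_\d$), a short calculation shows that the entries of $W^{-1} \b_\p W$ all lie in $A$, that its $(2,1)$-entry is divisible by $\pi_\m$, and that its determinant equals $\pi_\p$. Hence $W^{-1}\b_\p W$ is an integral matrix of determinant $\pi_\p$ that is upper triangular modulo $\m$; since $(\p,\m) = (1)$, the general Hecke theory recalled in Section \ref{SecSttNot} identifies $\G_0(\m)\, \b_\p\, \G_0(\m)$ with the set of all such matrices, giving the inclusion.

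The main obstacle I expect is the integrality check above: one must verify that the factor $\pi_\d$ appearing in the denominator of $W^{-1}$ actually cancels in the product $W^{-1}\b_\p W$. This cancellation relies on the determinant relation for the entries of $W$ and, more conceptually, on the coprimality $(\p,\d) = (1)$ implied by $\p\nmid\m$ and $\d\|\m$; this coprimality ensures that the $\p$-part and the $\d$-part of the matrix entries do not interfere. Once integrality is in hand, the remaining verifications are routine and the proof closes via standard elementary-divisor arguments.
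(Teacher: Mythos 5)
Your proposal is correct and takes a genuinely different route from the paper. The paper argues by direct matrix manipulation: using $\T_\p(f)=P^{k-m}\mathbf{D}_\p(f)+\U_\p(\mathbf{D}_1(f))$, it picks the explicit representative $W_\d^\m=\smatrix{\delta}{-\a}{\pi P^2}{\b\delta}$, checks that conjugating by $\smatrix{P}{0}{0}{1}$ returns another valid representative of $W_\d^\m$ (handling the $\mathbf{D}_\p$ piece), and then exhibits a matrix identity that reindexes the $Q$-sum in $\U_\p$ by matching each $Q$ to the unique $Q'$ with $Q\equiv\b Q'\pmod P$. You instead package the whole problem as a double-coset statement, reducing commutativity to the single inclusion $W^{-1}\smatrix{P}{0}{0}{1}W\in\G_0(\m)\smatrix{P}{0}{0}{1}\G_0(\m)$. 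That reduction is sound: $W$ normalizing $\G_0(\m)$ together with the inclusion forces $\{W^{-1}\a_iW\}$ to be a complete system of left-coset representatives (distinct because $W$ normalizes $\G_0(\m)$, exhaustive by counting), after which the two sums of slash operators match term by term. Your framing is cleaner and more conceptual; the paper's is more constructive, and the explicit identities it produces are reused immediately in Corollary~\ref{CorComm} and again in the proof of Lemma~\ref{LemTr}.

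There is one gap, and it is essentially a misattributed citation rather than a wrong step. You invoke ``the general Hecke theory recalled in Section~\ref{SecSttNot}'' to identify $\G_0(\m)\smatrix{P}{0}{0}{1}\G_0(\m)$ with the set of integral matrices of determinant $P$ that are upper triangular modulo $\m$. Section~\ref{SecSttNot} contains no such statement---the Hecke operators there are defined directly by the explicit sums in Definition~\ref{DefHecke}, and double cosets never appear. The characterization you want is the $A=\F_q[t]$ analogue of the classical description of $\G_0(N)\smatrix{p}{0}{0}{1}\G_0(N)$ for $p\nmid N$; it is true and standard, but in the present setting it would need to be proved or properly cited. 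You can sidestep it entirely, and reconnect with the paper's computation, by taking $W=\smatrix{\delta}{-\a}{\pi P^2}{\b\delta}$ and observing directly that $\smatrix{P}{0}{0}{1}W\smatrix{P}{0}{0}{1}^{-1}$ is again a matrix of the required shape for $W_\d^\m$, which immediately gives $W^{-1}\smatrix{P}{0}{0}{1}W\in\G_0(\m)\smatrix{P}{0}{0}{1}$ without any appeal to general coset theory. A smaller correction: the integrality of $W^{-1}\smatrix{P}{0}{0}{1}W$ follows from $\delta\mid\pi$ (so $\pi/\delta\in A$ absorbs the factor $\delta$ in the denominator of $W^{-1}$), not from $(\p,\d)=(1)$; the coprimality $(\p,\m)=(1)$ is what underlies the double-coset characterization itself.
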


\noindent  Then, in Section \ref{SubSecTwist}, given the {\em trace map} $\Tr_\m^{\m\p}: M_{k,m}(\G_0(\m\p))\to M_{k,m}(\G_0(\m))$ we define 
 for any divisor $\d$ of $\m\p$ such that $\d|| \m\p$ the {\em $\d$-twisted trace map} (see Definition \ref{dtwistedTr})
 as
 \[   \Tr^{\m\p (\d)}_{\m}:= \Tr_\m^{\m\p}\circ \W_\d^{\m\p}:M_{k,m}(\G_0(\m\p))  \to M_{k,m}(\G_0(\m))\,,\]
 and we prove the following
  \begin{prop}[Proposition \ref{PropKerTraces}]
With notations as above, we have:
\begin{align*} \W^{\m\p}_\m\circ \Tr_\m^{\m\p(\d)}= \left\{ \begin{array}{ll} \delta^{2m-k} \Tr_\m^{\m\p (\frac{\m}{\d})} &  {\rm if}\ \p \nmid \d \\
\ & \\
(\frac{\delta}{P})^{2m-k} \Tr_\m^{\m\p (\frac{\m\p^2}{\d})} & {\rm if}\ \p | \d
\end{array} \right.\,.
\end{align*}

\end{prop}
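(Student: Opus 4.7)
The plan is to unfold the definition $\Tr_\m^{\m\p(\d)} = \Tr_\m^{\m\p}\circ \W_\d^{\m\p}$ and then reduce the computation to two ingredients: a commutation between $\W^{\m\p}_\m$ and the trace map, together with the product rule for partial Atkin--Lehner involutions at level $\m\p$.

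First I would establish the compatibility
\[
\W^{\m\p}_\m \circ \Tr_\m^{\m\p} \;=\; \Tr_\m^{\m\p}\circ \W^{\m\p}_\m
\]
on $M_{k,m}(\G_0(\m\p))$, which then yields
\[
\W^{\m\p}_\m \circ \Tr_\m^{\m\p(\d)} \;=\; \Tr_\m^{\m\p}\circ \bigl( \W^{\m\p}_\m\circ \W_\d^{\m\p}\bigr).
\]
This commutation should follow by a direct coset computation: an explicit matrix representative of $\W^{\m\p}_\m$ is supported on the $\m$-part (its reduction mod $\p$ is in $GL_2(\F_q[t]/\p)$ up to an upper-triangular twist), so conjugation by it normalizes a complete set of representatives of $\G_0(\m\p)\backslash \G_0(\m)$ modulo $\G_0(\m\p)$. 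This is essentially the same mechanism underlying Theorem \ref{ThmComm}, applied at the level of coset decompositions rather than of Hecke double cosets.

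The second ingredient is the product formula for partial Atkin--Lehner involutions on $M_{k,m}(\G_0(\m\p))$: for $\d_1,\d_2 || \m\p$,
\[
\W^{\m\p}_{\d_1}\circ \W^{\m\p}_{\d_2} \;=\; \lambda\, \W^{\m\p}_{\d_1\ast \d_2},\qquad \d_1\ast \d_2 = \d_1\d_2/(\d_1,\d_2)^2,
\]
with a scalar $\lambda$ coming from the $|_{k,m}$ action on the overlap $(\d_1,\d_2)$. Taking $\d_1=\m$, $\d_2=\d$, a short case analysis identifies $\m\ast \d$: if $\p\nmid \d$ then $\d\mid \m$, $(\m,\d)=\d$, and $\m\ast \d=\m/\d$; if $\p\mid \d$, writing $\d=\p\d_0$ with $\d_0\mid \m$, one has $(\m,\d)=\d_0$ and $\m\ast \d=\m\p/\d_0=\m\p^2/\d$. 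Combining with the commutation step gives
\[
\W^{\m\p}_\m\circ \Tr_\m^{\m\p(\d)} \;=\; \lambda\, \Tr_\m^{\m\p(\m\ast \d)},
\]
which matches the stated formula provided $\lambda$ is identified with $\delta^{2m-k}$, respectively $(\delta/P)^{2m-k}$.

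The main obstacle is computing $\lambda$ precisely. The slash action $|_{k,m}$ of the matrices representing $\W^{\m\p}_\m$ and $\W^{\m\p}_\d$ produces a scalar that depends on both $k$ and $m$ through the Drinfeld weight--type normalization, and the two cases differ because of how the factor $\p$ sitting in $\m\p$ interacts with the overlap $(\m,\d)$. In the case $\p\nmid \d$ the overlap is exactly $\d$ and the scalar comes out cleanly as $\delta^{2m-k}$. In the case $\p\mid \d$ the overlap is only $\d_0=\d/\p$, and after accounting for the extra copy of $\p$ separating $\d$ from $\m\p$, the normalization produces the twisted factor $(\delta/P)^{2m-k}$. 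A careful matrix computation using the explicit representatives of $\W^{\m\p}_\m$ and $\W^{\m\p}_\d$ introduced in Section \ref{AtLeInv}, together with the determinant/scaling rules for the $|_{k,m}$ action, should close the proof.
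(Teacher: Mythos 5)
Your argument is correct and reaches the right scalars, but it takes a genuinely different route from the paper. The paper's proof is a one-liner: it invokes the previously derived closed-form expression \eqref{twist2}, namely $\Tr^{(\d)} = \W_\d^{\m\p} + P^{-m}\W_\d^{\m\p}\circ\U_\p\circ\W_\p^{\m\p}$ when $\p\nmid\d$ (resp.\ $\Tr^{(\d)} = \W_\d^{\m\p} + P^{m-k}\W_{\d/\p}^{\m\p}\circ\U_\p$ when $\p\mid\d$), composes with $\W_\m^{\m\p}$ on the left, and applies the product rule $W_\d^{\m\p}W_\m^{\m\p} = \smatrix{(\delta,\pi)}{0}{0}{(\delta,\pi)}W_{\m\ast\d}^{\m\p}$ to each term, with $(\delta,\pi)=\delta$ or $\delta/P$ according to the case; the scalar $(\delta,\pi)^{2m-k}$ then factors out and the remainder is recognized as $\Tr^{(\m\ast\d)}$ via \eqref{twist2} again. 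You instead factor the proposition into two lemmas: (i) the commutation $\W_\m^{\m\p}\circ\Tr_\m^{\m\p} = \Tr_\m^{\m\p}\circ\W_\m^{\m\p}$, and (ii) the product rule applied once, to $\W_\m^{\m\p}\circ\W_\d^{\m\p}$. This is cleaner in that the product rule enters only once and \eqref{twist2} is bypassed entirely, but it forces you to justify (i) from scratch; the paper's route reuses machinery (Lemma \ref{LemTr}, Corollary \ref{CorComm}, \eqref{twist2}) that it needs elsewhere anyway. On your step (i): the essential fact is that $W_\m^{\m\p}$ normalizes \emph{both} $\G_0(\m\p)$ and $\G_0(\m)$. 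The first is the standard Atkin--Lehner property; the second follows most transparently not from the ``upper-triangular mod $\p$'' observation you give, but from the factorization $W_\m^{\m\p}=\g_0 W_\m^\m$ with $\g_0\in\G_0(\m)$ (this is the computation in the proof of Lemma \ref{LemTr}), since $W_\m^\m$ is the Fricke involution at level $\m$. Once both normalizations are in hand, conjugation by $W_\m^{\m\p}$ permutes a set of representatives of $\G_0(\m\p)\backslash\G_0(\m)$ and (i) follows by reindexing the sum defining $\Tr$. Your ``main obstacle'' about computing $\lambda$ is less of an obstacle than you suggest: $\lambda$ is exactly the slash-action of the scalar matrix $\smatrix{(\delta,\pi)}{0}{0}{(\delta,\pi)}$ appearing in the product rule at level $\m\p$, which is $(\delta,\pi)^{2m-k}$ by the $|_{k,m}$ normalization, and the case split $\p\nmid\d$ vs.\ $\p\mid\d$ only affects whether the gcd $(\delta,\pi)$ equals $\delta$ or $\delta/P$.
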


We end Section \ref{SecTwist} providing our equivalent definition of $\p$-newform and proving the following

\begin{thm}[Proposition \ref{ThmCommNew} and Theorem \ref{DirSum}]
The involution $\W^{\m\p}_\p$ and the operator  $\U_\p$ commute on the space of $\p$-newforms of level
$\m\p$.\\
The map $\D:=Id-P^{k-2m}(\Tr_\m^{\m\p(\p)})^2$ is bijective on $S_{k,m}(\G_0(\m\p))$ if and only if we have the direct sum decomposition 
between $\p$-olforms and $\p$-newforms of level $\m\p$.
\end{thm}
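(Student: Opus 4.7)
My strategy is to treat the two assertions in turn.

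For the commutation $\U_\p\,\W_\p^{\m\p}=\W_\p^{\m\p}\,\U_\p$ on $\p$-newforms, I cannot invoke Theorem \ref{ThmComm} directly, since its hypothesis $\p\nmid\m$ precludes taking the Hecke operator at a prime dividing the level $\m\p$. Instead, I would derive an operator identity on $S_{k,m}(\G_0(\m\p))$ expressing the commutator $[\U_\p,\W_\p^{\m\p}]$ in terms of the degeneracy and trace maps attached to the inclusion $\G_0(\m\p)\subset\G_0(\m)$. The natural route is to compute $\U_\p\,\W_\p^{\m\p}$ and $\W_\p^{\m\p}\,\U_\p$ by explicit coset decompositions --- choosing representatives for $\U_\p$ in terms of the monic generator of $\p$, and using the matrix realization of $\W_\p^{\m\p}$ from Section \ref{AtLeInv}. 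The difference should then factor through a composition of the form $\iota_1\circ\Tr_\m^{\m\p(\p)}$, where $\iota_1:S_{k,m}(\G_0(\m))\hookrightarrow S_{k,m}(\G_0(\m\p))$ is the natural inclusion, so that its image lies entirely in the $\p$-oldform subspace. Since by the equivalent definition of $\p$-newform adopted in Section \ref{SecTwist} the map $\Tr_\m^{\m\p(\p)}$ annihilates all of $S^{\mathrm{new}}_{k,m}(\G_0(\m\p))$, the commutator vanishes on newforms.

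For the equivalence "direct sum decomposition $\Longleftrightarrow$ $\D$ bijective", I would first use that the sum $S^{\mathrm{old}}_{k,m}(\G_0(\m\p))+S^{\mathrm{new}}_{k,m}(\G_0(\m\p))$ always exhausts the whole cusp-form space (a fact already established in \cite{BV4}), so only the directness is at issue. The first observation is that $\D|_{S^{\mathrm{new}}}=\mathrm{Id}$, since $\Tr_\m^{\m\p(\p)}=0$ on newforms by the equivalent definition; in particular $\ker\D\cap S^{\mathrm{new}}=0$. The second is that on $S^{\mathrm{old}}$ a double application of Proposition \ref{PropKerTraces} --- once in each branch, $\p\nmid\d$ and $\p\mid\d$ --- combined with the standard trace-inclusion identities, identifies $P^{k-2m}(\Tr_\m^{\m\p(\p)})^2|_{S^{\mathrm{old}}}$ with an explicit endomorphism whose fixed locus coincides with $S^{\mathrm{old}}\cap S^{\mathrm{new}}$. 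Writing an arbitrary $f\in\ker\D$ as $f_{\mathrm{o}}+f_{\mathrm{n}}$ and observing that $\D f_{\mathrm{o}}\in S^{\mathrm{old}}$ forces $f_{\mathrm{n}}\in S^{\mathrm{old}}\cap S^{\mathrm{new}}$, one concludes $\ker\D=S^{\mathrm{old}}\cap S^{\mathrm{new}}$. Finite-dimensionality then yields the desired equivalence.

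The main obstacle I anticipate is in verifying the precise operator identity on $S^{\mathrm{old}}$ in the second step: squaring $\Tr_\m^{\m\p(\p)}$ cycles through both cases of Proposition \ref{PropKerTraces}, and the scalar $P^{k-2m}$ appearing in $\D$ must be matched against the product of the $\delta^{2m-k}$ and $(\delta/P)^{2m-k}$ factors produced by the two applications. Likewise, in the first assertion, pinning down the exact coset representatives so that the commutator identity factors cleanly through $\iota_1\circ\Tr_\m^{\m\p(\p)}$ will require some care with the type-$m$ slash-action conventions. Once these bookkeeping issues are under control, the conceptual steps outlined above should go through.
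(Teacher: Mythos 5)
Your handling of the first assertion (commutativity on newforms) is more elaborate than what is needed, and your claim that the commutator factors through $\iota_1\circ\Tr_\m^{\m\p(\p)}$ alone is not quite right. Using \eqref{eqTr} and \eqref{twist2} one finds that $\U_\p\circ\W_\p^{\m\p}=P^m(\Tr-\mathrm{Id})$ and $\W_\p^{\m\p}\circ\U_\p=P^{k-m}\W_\p^{\m\p}\circ\Tr^{(\p)}-P^m\,\mathrm{Id}$, so the commutator involves \emph{both} $\Tr$ and $\Tr^{(\p)}$. The conclusion is still fine because a newform kills both traces, but the paper's argument is far more direct: from $\Tr(f)=0$ one reads off $\U_\p(f\mf W_\p^{\m\p})=-P^m f$, from $\Tr^{(\p)}(f)=0$ one reads off $\U_\p(f)\mf W_\p^{\m\p}=-P^m f$, and equality is immediate. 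No new coset computation is required.

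Your treatment of the second assertion has two genuine errors. First, the claim that $S^{\mathrm{old}}+S^{\mathrm{new}}$ always equals $S_{k,m}(\G_0(\m\p))$ is not a result of \cite{BV4}; whether the sum fills the space is precisely part of what the bijectivity of $\D$ is used to prove (in the paper's argument the "sum covers" direction constructs $f,g$ via $\D^{-1}$). Second, the claim $\ker\D=S^{\mathrm{old}}\cap S^{\mathrm{new}}$ is self-defeating: you correctly note $\D$ restricts to the identity on $S^{\mathrm{new}}$, hence $\ker\D\cap S^{\mathrm{new}}=0$; but $S^{\mathrm{old}}\cap S^{\mathrm{new}}\subseteq S^{\mathrm{new}}$, so equality would force $\ker\D=0$ unconditionally, making $\D$ always bijective and the theorem vacuous. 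The actual relationship (used in the paper's $\Leftarrow$ direction) is that a nonzero $f\in\ker\D$ produces a nonzero element of $S^{\mathrm{old}}\cap S^{\mathrm{new}}$ only after applying $\U_\p$: one checks $f\in Im(\mathbf{D}_1)$, $\U_\p(f)\ne 0$ and $\U_\p(f)$ is old, then computes $\Tr(\U_\p f)=0$ and $\Tr^{(\p)}(\U_\p f)=-P^m\D(f)=0$, so $\U_\p(f)$ is also new. The "cycling through both cases of Proposition \ref{PropKerTraces}" you invoke does not occur: squaring $\Tr^{(\p)}$ uses the $\p\mid\d$ branch twice, and Proposition \ref{PropKerTraces} (which involves a precomposition with $\W_\m^{\m\p}$) is not the relevant tool for analysing $(\Tr^{(\p)})^2$ on oldforms. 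You would need to redo this step along the lines just sketched.
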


We end the present paper with Section \ref{SecApp} in which we apply the theory of twisted traces to the following topics:
\begin{itemize}
\item[1.] {relation between $\p$-newforms of different levels (Lemma \ref{NewDiffLev});}
\item[2.] {$\p$-adic modular forms for the group $\G_0(\m)$ with $(\m,\p)=(1)$ and integral $u$-series coefficients (Proposition \ref{PropVinc}).}
\end{itemize}

\section{Setting and notations}\label{SecSttNot}

 Here we collect the main facts about Drinfeld modular forms and Atkin--Lehner involutions. 
 
 \subsection{Drinfeld modular forms}
 Let $K$ be the global function field $\F_q(t)$, where $q$ is a power of a fixed prime $p$, fix the prime  $\frac{1}{t}$ at $\infty$ and denote by 
 $A=\F_q[ t]$ its ring of integers. Let $K_\infty$ be  the completion of $K$ at $\infty$ and denote by $\C_\infty$ the completion of an algebraic closure of $K_\infty$.
The set $\Omega:=\P^1(\C_\infty)-\P^1(K_\infty)$, together with a structure of rigid analytic space (see \cite{FdP}), is the so called {\em Drinfeld upper half-plane}. 

The group $GL_2(K_\infty)$ acts on $\Omega$ via fractional linear transformations $$\matrix{a}{b}{c}{d}(z)=\dfrac{az+b}{cz+d}\,.$$ 
For $\g\in GL_2(K_\infty)$, $k,m\in\Z$ and $f: \Omega\to \C_\infty$,
we define the $\mf$ operator by 
\[  (f \mf \g)(z):=f (\g z)(\det \g)^m(cz+d)^{-k}. \]
Let us consider, for an ideal $\n$ in $A$,  the following arithmetic subgroup of $GL_2(A)$:
\[  \G_0(\n):=\left\{ \matrix{a}{b}{c}{d}\in GL_2(A)\ |\ c\equiv 0\pmod{\n}  \right\} \,. \]
The orbits for the action of $\G_0(\n)$ on $\P^1(K)$ are finitely many and are called the {\em cusps} of $\G_0(\n)$. 
The compactification of $\G_0(\n)\backslash\Omega$, obtained by adding to $\G_0(\n)\backslash \Omega$ the cusps of $\G_0(\n)$, is a connected,
smooth and proper curve over $\C_\infty$: it is denoted by $X_0(\n)$.

\begin{defin}
{\em A rigid analytic function $f:\Omega\to \C_\infty$ is called a} Drinfeld modular form of weight $k$ and type $m$ for $\G_0(\n)$ {\em if 
\[  (f\mf \g)(z)=f(z) \ \ \forall \g\in \G_0(\n)\] and it is holomorphic at all cusps.}\\
{\em If $f$ vanishes at all cusps, then it is called {\em cusp form}.}
\end{defin}

The space of Drinfeld modular forms of weight $k$ and type $m$ for $\G_0(\n)$ will be denoted by $M_{k,m}(\G_0(\n))$ and that of cusp forms by $S_{k,m}(\G_0(\n))$.

We recall that weight and type are not independent of each other: if $k\not\equiv 2m \pmod{q-1}$, then $M_{k,m}(\G_0(\n))=0$.

Both $M_{k,m}(\G_0(\n))$ and $S_{k,m}(\G_0(\n))$ are finite dimensional $\C_\infty$-vector spaces (see \cite{Ge80} for details on dimensions). Moreover, since
$M_{k,m}(\G_0(\n))\cdot M_{k',m'}(\G_0(\n))\subset M_{k+k',m+m'}(\G_0(\n))$ we have that

\[  M(\G_0(\n))=\bigoplus_{k,m} M_{k,m}(\G_0(\n))  \] is a graded $\C_\infty$-algebra.

 The space $M_{k,m}(\G_0(\n))$ is equipped with an Hecke action that we are going to describe. 
 
 \begin{defin}\label{DefHecke}
 {\em For $\p=(P)$ a prime ideal of $A$ with monic generator $P\in A$ of degree $d$, the} Hecke operator {\em acting on $f\in M_{k,m}(\G_0(\n))$ is 
 \begin{align*} 
  \T_\p(f)(z) & =  P^{k-m}\left[ (f\mf \matrix{P}{0}{0}{1})(z) + \sum_{\begin{subarray}{c}  Q\in A\\ \deg Q< d \end{subarray}}  (f\mf \matrix{1}{Q}{0}{P})(z)\right] \quad & {\mathrm{if}}\  \p\nmid \n \\
  \U_\p (f)(z)&  =P^{k-m} \sum_{\begin{subarray}{c}  Q\in A\\ \deg Q< d \end{subarray}}  (f\mf \matrix{1}{Q}{0}{P})(z)\quad & {\mathrm{if}}\ \p | \n\,.
 \end{align*}
 When $\p |\n$ the Hecke operator is also known as the} Atkin--Lehner {\em operator.}
 \end{defin}
 
 As observed in \cite[Section 7]{Ge88} Hecke operators preserve the space of cusp forms.\\
 
 Please note that the normalization used in the above definition differs from that of many other authors; we prefer keeping this one for coherence with formulas in
 previous papers on oldforms and newforms. 
 
 \subsubsection{Fourier expansions}
 Let $C$ be the {\em Carlitz module}, first studied in \cite{C}, and let $\tilde{\pi}\in K_\infty(\sqrt[q-1]{-t})$, defined up to a $(q-1)$th root of unity, be its {\em Carlitz period}.
 We fix such a $\tilde{\pi}$ for the reminder of this paper and let $L=\tilde{\pi}A$ be  the associated $A$-lattice.  The {\em Carlitz exponential} is
 the function $e_L(z): \C_\infty \to \C_\infty$
 \[  e_L(z):= z\prod_{\begin{subarray}{c} \lambda \in L \\ \lambda\neq 0\end{subarray}} \left(1-\frac{z}{\lambda}\right) \]
 which is entire and $\F_q$-linear in $z$ (more details and/or proofs can be found in \cite{Ge88}).
 
 Now consider the following analytic (rigid analytic) function on $\Omega$:
 \[  u:=u(z)=\frac{1}{e_L(\tilde{\pi}z)} ,  \]
which is invariant by translation $z\mapsto z+a $, $a\in A$ and therefore is a uniformizer at the cusp infinity.
 
 Let $f\in M_{k,m}(\G_0(\n))$. Then $f$ has a unique $u$-expansion 
 \[  f(z)=\sum_{i\geqslant 0}a_i u(z)^i \quad a_i\in \C_\infty  \]
 converging to $f$ for $z\in \Omega$ lying in a rigid analytic neighborhood of $\infty$ in the projective line.\\
 Observe that if $f$ is cuspidal then $a_0=0$.
 
 \subsubsection{Eisenstein series}\label{SecEis} We mention a few facts about a particular modular form of level 1 that we will need later.
 For $k\in \Z_{>0}$ and $z\in\Omega$, Goss defined in \cite{Go} an Eistenstein series 
 \begin{align*}  g_k:= (-1)^{k+1} \tilde{\pi}^{1-q^k} L_k \sum_{\begin{subarray}{c} a,b\in A \\ (a,b)\neq (0,0) \end{subarray}} \frac{1}{(az+b)^{q^k-1}} 
 \end{align*}
 where $L_k$ is the least common multiple of all monic polynomials of degree $k$. 
 It can be proved that $g_k$ is modular of weight $q^k-1$ and type 0 for $GL_2(A)$. Moreover, $g_k$ has integral, i.e in $A$, $u$-series coefficients (see \cite{Ge88}).
 
\subsubsection{$\p$-adic Drinfeld modular forms} Following Serre (\cite{Se}), Vincent  in \cite{Vi} defined $\p$-adic Drinfeld modular forms for $GL_2(A)$. 
Proceeding along the same lines, we shall generalize the definition to level $\G_0(\n)$.

As before, let $\p=(P)$ be a prime with $P\in A$ of degree $d$. We write $v_\p$ for the usual $\p$-adic valuation at $\p$. 

\begin{defin}
{\em Consider the formal power series $f=\sum_i a_i u^i\in K\llbracket u\rrbracket$. The} valuation of $f$ at $\p$ {\em is 
\[  v_\p(f)=\inf_i v_\p(a_i)\,. \]}
{\em We say that two formal series $f=\sum_i a_i u^i$ and $g=\sum_i b_i u^i$ in $K\llbracket u\rrbracket$ are} congruent modulo $\p^m$,
and we write  $f\equiv g$ $\pmod{\p^m}$, if {\em 
$v_\p(f-g)\geqslant m$.}
\end{defin}

\begin{defin}\label{DefpAdicSerre}
{\em A} $\p$-adic Drinfeld modular form of level $\n$ {\em is a formal power $u$-series expansion $f=\sum a_j u^j$
such that there exists a sequence $\{f_i\}$ of Drinfeld modular forms for $\G_0(\n)$ verifying  
$v_\p(f_i-f)\to\infty$  as $i\to \infty$.}
\end{defin}

\subsection{Atkin--Lehner involutions}\label{AtLeInv}
Here we define the  {\em Atkin--Lehner involutions}, summarize their fundamental properties and recall some basic results that will play a crucial role in what follows; for more details on such operators in the Drinfeld setting the reader is referred to the paper \cite{S}. 
We take the opportunity to remark that the behaviour of such involutions does not differ too much from the 
classical case, i.e. characteristic zero setting, and the results/proofs in this section follow (mutatis mutandis)  those in the original paper of Atkin and Lehner \cite{AL}. \\

Let $\n=(\nu), \d=(\delta) \subset A$ be ideals such that  $\d ||\n$. Denote by $W_\d^\n$ a matrix of the form 
\[ \matrix{\delta a}{b}{\nu c}{\delta d}\quad{\rm with}\quad   a,b,c,d\in A,\ \delta^2ad-\nu cb=\zeta\delta\ {\rm and}\   \zeta\in \F_q^*\,.\]

It is easy to verify that such matrices are in the normalizer of $\G_0(\n)$, i.e. 
\[ \matrix{\delta a}{b}{\nu c}{\delta d}\G_0(\n)\matrix{\delta a'}{b'}{\nu c'}{\delta d'}=\G_0(\n) \]
for any $\matrix{\delta a}{b}{\nu c}{\delta d}$ and $\matrix{\delta a'}{b'}{\nu c'}{\delta d'}$ as above. 
Therefore, we can give the following (well  posed) definition:

\begin{defin}
{\em Let $\n=(\nu), \d=(\delta) \subset A$ be ideals such that  $\d ||\n$.} The  (partial) Atkin--Lehner involution {\em  $\W_\d^\n$  acting on $M_{k,m}(\Gamma_0(\n))$  is:
\begin{align*}
\W_\d^\n : M_{k,m}(\Gamma_0(\n)) & \to M_{k,m}(\Gamma_0(\n))\\
f(z) & \mapsto (f\mf W_\d^\n) (z)\,
\end{align*}}
for any $W_\d^\n$ as above.
\end{defin}

In order to ease notations we shall drop the $(z)$ everywhere from now on. \\

It is easy to see that  the $\W_\d^\n$'s are involutions: indeed
\[  (\W_\d^{\n})^2(f)=(\zeta\delta)^{2m-k} f \,.\]

We also observe that,  for $\n_1=(\nu_1),\n_2=(\nu_2)$ with $(\n_i,\n/{\n_i})=(1)$, $i=1,2$, we can write

\[  W_{\n_1}^\n W_{\n_2}^\n=\smatrix{(\nu_1,\nu_2)}{0}{0}{(\nu_1,\nu_2)}W_{\frac{\n_1\n_2}{(\n_1,\n_2)^2}}^{\phantom{n}\n} \,. \]
This last formula also shows that the $W_\d^\n$'s commute.

Let $\n=\displaystyle{\prod_{i=1}^s}\d_i^{e_i}$ where $\d_i$ are distinct primes. Then $\displaystyle{\prod_{i=1}^s}W_{\d_i^{e_i}}^\n=W_\n^\n$ 
(except for multiplication by some $\zeta\in \F_q^*$) and 
it can be represented by the matrix $\matrix{0}{-1}{\nu}{0}$; if this is the case we obtain the  {\em (full) Atkin--Lehner involution} $W_\n^\n$, also
known as  {\em Fricke involution}.\\

The Atkin--Lehner involutions send cusps in cusps as the following theorem shows:

\begin{prop}\label{ActCusps}
With notations as above, we have:
\begin{itemize}
\item[{\em i)}] {every cusp of $X_0(\n)$ has a representative $x \choose y$ where $x,y\in A$ are monic, $y|\nu$ and $(x,\nu)=1$. Two representatives $x\choose y$, 
$x' \choose y'$ represent the same cusp if and only if $y=y'$ and $\zeta x'\equiv x \pmod{(y,y/\nu)}$ for some $\zeta\in \F_q^*$;}
\item[{\em ii)}] {if $x\choose y$ and ${x' \choose y'}= W_{\n_2}^\n {x \choose y}$ are cusps of $X_0(\n)$, where $\n=\n_1\n_2$, $\n_1=(\nu_1)$, $\n_2=(\nu_2)$ and $(\n_1,\n_2)=(1)$, then $y' =\frac{\nu_2}{(y,\nu_2)}(y,\nu_1)$.}
\end{itemize}
\end{prop}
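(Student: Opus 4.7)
My plan is to imitate the original Atkin--Lehner calculation \cite{AL}, adapted to the fact that $A=\F_q[t]$ is Euclidean. The foundational observation, which I would state up front, is that any $\g=\smatrix{a}{b}{c}{d}\in\G_0(\n)$ satisfies $\nu\mid c$ and $ad-bc\in\F_q^*$, forcing $(d,\nu)=1$; hence $cx+dy\equiv dy\pmod\nu$ for any primitive column $\binom{x}{y}$, so the monic generator of the ideal $(y,\nu)$ is a $\G_0(\n)$-invariant of the associated cusp. In a canonical representative this invariant will coincide with the chosen $y$.

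For existence in (i), starting from a primitive $\binom{x}{y}$ I would set $\delta=(y,\nu)$ (monic), $y=\delta y_1$, $\nu=\delta\nu_1$. A prime-by-prime valuation check gives $(y_1,\nu_1)=1$, hence $(\nu_1 x,y_1)=1$ as well, so Bezout in $A$ produces $c'',d\in A$ with $\nu_1 c''x+dy_1=\zeta\in\F_q^*$. After possibly adjusting by the Bezout freedom $(c'',d)\mapsto(c''+ty_1,\,d-t\nu_1 x)$ to ensure $(\nu c'',d)=1$, this coprime pair extends to some $\g\in\G_0(\n)$ sending $\binom{x}{y}$ to $\zeta\binom{x'}{\delta}$. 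A subsequent translation $\smatrix{1}{n}{0}{1}\in\G_0(\n)$ with $n$ chosen by CRT at the primes of $\nu_1$ (disjoint from those of $\delta$) achieves $(x',\nu)=1$, after which a unit rescaling makes everything monic. For uniqueness, if $\g\binom{x}{y}=\eta\binom{x'}{y}$, the bottom row forces $c=\nu c''$ and $d\equiv\eta\pmod{\nu_1}$ (with $\nu_1=\nu/y$), while $ad\equiv\det\g\pmod\nu$ gives $a\equiv\eta^{-1}\det\g\pmod{(y,\nu_1)}$; reducing the top row $ax+by=\eta x'$ modulo $(y,\nu/y)$ kills $by$ and yields $\zeta'x'\equiv x\pmod{(y,\nu/y)}$ with $\zeta'=\eta^2/\det\g\in\F_q^*$. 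The converse reverses this construction.

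For (ii), I would apply $W_{\n_2}^\n=\smatrix{\nu_2 a}{b}{\nu c}{\nu_2 d}$ (with $\nu_2 ad-\nu_1 bc=\zeta\in\F_q^*$) to the canonical $\binom{x}{y}$, obtaining $\binom{\nu_2 ax+by}{\nu_2(\nu_1 cx+dy)}$. The determinant relation forces $(a,\nu_1)=(d,\nu_1)=(b,\nu_2)=(c,\nu_2)=1$. A prime-by-prime analysis---at $\p\mid\nu_1$, where $\nu_2$ is a $\p$-unit and $dy$ dominates $\nu_1 cx$; at $\p\mid\nu_2$, where $\nu_1 cx+dy$ is a $\p$-unit because $(c\nu_1 x,\p)=1$---shows that the overall gcd of the two coordinates equals $(y,\nu_2)$, so after dividing out and applying the invariance from (i) the canonical denominator becomes $y'=\frac{\nu_2}{(y,\nu_2)}(y,\nu_1)$, as claimed. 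The main technical obstacle I anticipate is the bookkeeping in the uniqueness step of (i), disentangling $\det\g$, $\eta$ and the final $\zeta'$ so that the congruence lands modulo exactly $(y,\nu/y)$ rather than a proper divisor.
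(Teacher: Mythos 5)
The paper does not actually prove Proposition~\ref{ActCusps}: the stated ``proof'' is a citation to Schweizer~\cite{S}, whose arguments in turn transcribe the original Atkin--Lehner computation to $\F_q[t]$. Your sketch reconstructs that computation directly, and the overall architecture is sound. Two concrete observations. First, the technical worry you flag in the uniqueness step of (i) actually dissolves: from the bottom row of $\g\binom{x}{y}=\eta\binom{x'}{y}$ you get $cx+dy=\eta y$, and since $\nu\mid c$ this gives $(d-\eta)y\equiv 0\pmod\nu$; because $y\mid\nu$ you may cancel $y$ outright and obtain $d\equiv\eta\pmod{\nu/y}$, which is stronger than the $(y,\nu/y)$ congruence you asked for. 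Feeding this into $ad\equiv\det\g\pmod\nu$ gives $a\equiv\eta^{-1}\det\g\pmod{\nu/y}$, and reducing $ax+by=\eta x'$ modulo $(y,\nu/y)$ kills $by$ and yields $\zeta'x'\equiv x\pmod{(y,\nu/y)}$ with $\zeta'=\eta^2/\det\g$, exactly the displayed modulus (note the statement's ``$(y,y/\nu)$'' is a typo for $(y,\nu/y)$; you have it right). Second, your prime-by-prime computation in (ii) is correct: the column $\binom{\nu_2 ax+by}{\nu_2(\nu_1 cx+dy)}$ has content $(y,\nu_2)$, and after dividing it out the invariant $(\,\cdot\,,\nu)$ of the bottom entry is $\frac{\nu_2}{(y,\nu_2)}(y,\nu_1)$; checking the three cases $\ell\mid\nu_1$, $\ell\mid\nu_2$, $\ell\nmid\nu$ (the last using that $\det W_{\n_2}^\n=\zeta\nu_2$, so $W_{\n_2}^\n$ preserves primitivity away from $\nu_2$) confirms the formula. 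What your write-up buys over the paper is self-containment; what the paper buys by citing~\cite{S} is avoiding precisely the Bezout/CRT adjustments needed to extend $(\nu c'',d)$ to an element of $\G_0(\n)$ and to arrange $(x',\nu)=1$, which you correctly identify as the fiddly part but do not carry out. Those steps are routine but should be written explicitly if this proof were to replace the citation.
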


\begin{proof}
See \cite[Proposition 1, Lemma 8]{S}.
\end{proof}

We end this section with the trivial remark that the Atkin--Lehner involutions preserve the space of cusp forms.

\section{Twisted trace maps}\label{SecTwist}
As already mentioned in previous papers on newforms, in the positive characteristic setting we do not have an analogue of the Petersson inner product. Therefore, following an idea of  
Serre (\cite[$\S$3 Remarque 3]{Se}), $\p$-newforms
in the Drinfeld case were defined using kernels of trace maps from $S_{k,m}(\G_0(\m\p))$ to $S_{k,m}(\G_0(\m))$.

This section is devoted to the study of trace maps twisted by Atkin-Lehner involutions and their applications to the study of spaces of newforms.

\subsection{$\p$-oldforms}\label{OldDef}
For the convenience of the reader we recall here the main results about  oldforms  from the paper \cite{BV4}. 
We also add some new results on the interaction between Hecke and Atkin-Lehner operators.\\

From now on, let $\n=\m\p$ for fixed ideals $\m=(\pi)$ and $\p=(P)$, where $\pi$ and $P$ are monic generators, $P$ is irreducible of degree $d$ and $(\pi,P)=1$.\\

Consider the spaces $M_{k,m}(\G_0(\m))$ and $M_{k,m}(\G_0(\m\p))$ and the {\em degeneracy maps}:
\begin{align*}
\mathbf{D}_1 : S_{k,m}(\G_0(\m)) & \to S_{k,m}(\G_0(\m\p))\\
 f & \mapsto f
\end{align*}
\begin{align*}
\mathbf{D}_\p : S_{k,m}(\G_0(\m)) & \to S_{k,m}(\G_0(\m\p))\\
 f  & \mapsto f\mf \matrix{P}{0}{0}{1}
\end{align*}

\begin{defin}
The space of $\p$-oldforms of level $\m\p$, denoted by  $S_{k,m}^{\p-old}(\G_0(\m\p))$, is the subspace of $S_{k,m}(\G_0(\m\p))$ generated by the set
$\{ (\mathbf{D}_1,\mathbf{D}_\p)(f_1,f_2)=\mathbf{D}_1 f_1+ \mathbf{D}_\p f_2: (f_1,f_2)\in S_{k,m}(\G_0(\m))^2  \}$.
\end{defin}

\begin{rem}
Since $\p\nmid \m$, the map $(\mathbf{D}_1,\mathbf{D}_\p): S_{k,m}(\G_0(\m))^2 \to S_{k,m}(\G_0(\m\p))$ is injective; the same map is not injective if $\p |\m$ (see \cite[Section 2]{BV4}). \\
We do not know if $(\mathbf{D}_1,\mathbf{D}_\p)$ is injective also on the space of modular forms. Indeed, the proof relies on the isomorphism
between the space of cusp forms and that of harmonic cocycles (\cite[Proposition 2.1]{BV4}). Therefore, even if most of the  involved maps and operators  can be defined for modular forms, when speaking of oldforms and newforms we shall only work with cusp forms. 
However all
results we need from \cite{BV4}, and that do not require the injectivity of $(\mathbf{D}_1,\mathbf{D}_\p)$, will be used for general modular
forms $f\in M_{k.m}(\G_0(\m\p))$ without any further comment. 
\end{rem}

We can rewrite the $\p$-oldforms in terms of the Atkin--Lehner involutions thanks to the following

\begin{lem}\label{DW}
Let $\n=(\nu), \d=(\delta) \subset A$ be ideals such that  $\d ||\n$. If $f\in M_{k,m}(\G_0(\d))$ then
\[  f\mf\matrix{\frac{\nu}{\delta}}{0}{0}{1}:=\mathbf{D}_{\frac{\n}{\d}} (f)= f\mf W_{\frac{\n}{\d}}^\n\,. \]
\end{lem}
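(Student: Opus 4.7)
The plan is to exhibit an explicit factorization
\[ W_{\frac{\n}{\d}}^\n = \gamma \cdot \matrix{\frac{\nu}{\delta}}{0}{0}{1} \quad\text{with}\quad \gamma\in\G_0(\d), \]
and then invoke the modularity of $f$ under $\G_0(\d)$ to kill the $\gamma$-factor under the slash action. Since the first equality in the statement is a definition unrolling, the actual content is the second equality $\mathbf{D}_{\frac{\n}{\d}}(f) = f\mf W_{\frac{\n}{\d}}^\n$.

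First I would recall from Section \ref{AtLeInv} the Atkin--Lehner normalization convention: a representative of $W_\d^\n$ has the form $\matrix{\delta a}{b}{\nu c}{\delta d}$ with $\delta^2 ad - \nu cb = \zeta\delta$, $\zeta\in\F_q^*$. Applied to the divisor $\frac{\n}{\d}$ of $\n$ (which is legitimate since $\d||\n$ forces $(\nu/\delta,\delta)=1$), we may therefore take
\[ W_{\frac{\n}{\d}}^\n = \matrix{\frac{\nu}{\delta}\,a}{b}{\nu c}{\frac{\nu}{\delta}\,d}, \qquad a,b,c,d\in A, \]
subject to $(\nu/\delta)^2 ad - \nu cb = \zeta\,(\nu/\delta)$. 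Dividing this relation by $\nu/\delta$ yields the equivalent constraint $(\nu/\delta)\,ad - \delta cb = \zeta\in\F_q^*$.

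Next, setting $M:=\matrix{\frac{\nu}{\delta}}{0}{0}{1}$, a one-line matrix computation over $K$ gives
\[ \gamma \;:=\; W_{\frac{\n}{\d}}^\n\cdot M^{-1} \;=\; \matrix{a}{b}{\delta c}{\frac{\nu}{\delta}\,d}. \]
This matrix has entries in $A$, its lower-left entry $\delta c$ is divisible by $\delta$, and its determinant equals $(\nu/\delta)\,ad - \delta cb = \zeta\in\F_q^*$ by the simplified relation above. Hence $\gamma\in\G_0(\d)$. Using that $\mf$ is a right action together with the $\G_0(\d)$-invariance of $f$, I conclude
\[ f\mf W_{\frac{\n}{\d}}^\n \;=\; f\mf\gamma\mf M \;=\; f\mf M \;=\; \mathbf{D}_{\frac{\n}{\d}}(f), \]
as required.

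There is essentially no serious obstacle here; the argument is a direct matrix manipulation. The only point that needs care is parsing the Atkin--Lehner determinant normalization so that the factor $\gamma=W_{\frac{\n}{\d}}^\n M^{-1}$ lands in $\G_0(\d)$ with \emph{scalar} determinant, which is exactly what the constraint $(\nu/\delta)^2 ad - \nu cb = \zeta(\nu/\delta)$ is designed to deliver.
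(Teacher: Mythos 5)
Your proof is correct and follows essentially the same route as the paper: factor $W_{\frac{\n}{\d}}^\n = \gamma\,\smatrix{\nu/\delta}{0}{0}{1}$ with $\gamma\in\G_0(\d)$ and invoke the $\G_0(\d)$-invariance of $f$. The only cosmetic difference is that the paper builds one explicit Atkin--Lehner representative from a B\'ezout relation $\a\delta+\b\frac{\nu}{\delta}=1$, whereas you verify the factorization for an arbitrary representative; the underlying computation is identical.
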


\begin{proof}
Take $\a,\b\in A$ such that $\a \delta+\b \frac{\nu}{\delta}=1$, note that $\matrix{1}{-\a}{\delta}{\b \frac{\nu}{\delta}}\in\G_0(\d)$ and write
\[   f\mf \matrix{\frac{\nu}{\delta}}{0}{0}{1}= f\mf  \matrix{1}{-\a}{\delta}{\b \frac{\nu}{\delta}} \matrix{\frac{\nu}{\delta}}{0}{0}{1} = f\mf \matrix{\frac{\nu}{\delta}}{-\a}{\nu}{\b \frac{\nu}{\delta}} = f\mf W_{\frac{\n}{\d}}^\n\,. \qedhere\]
\end{proof}

Therefore 
\[  S_{k,m}^{\p-old}(\G_0(\m\p))=\mathrm{Span} \{ \W_1^{\m\p} (S_{k,m}(\G_0(\m))), \W_\p^{\m\p} (S_{k,m}(\G_0(\m)) ) \} \,. \]

\begin{rem}
The space of $\p$-oldforms is obviously a $\W_\p^{\m\p}$-invariant subspace of $S_{k,m}^{\p-old}(\G_0(\m\p))$.
\end{rem}

Our working definition of Hecke operators will make use of the above defined maps. Looking at Definition \ref{DefHecke}, for any $f\in M_{k,m}(\G_0(\m))$, we have
\[  \T_\p(f)=P^{k-m}\mathbf{D}_\p(f) + \U_\p(\mathbf{D}_1(f))\,. \]

By \cite[Proposition 2.13]{BV4}, we know that
\[ Ker(\U_\p)=Im (\mathbf{D}_\p)\,.  \] Therefore, $\U_\p$ preserves the space of $\p$-oldforms.

\begin{rem}\label{Integrality}
Before moving on we recall from \cite[$\S$ 3.1]{Vi} that if $f\in M_{k,m}(\G_0(\m))$ has $u$-expansion with integral coefficients, then the $u$-expansions of
$\U_\p(\mathbf{D}_1(f))$ and $\mathbf{D}_\p(f)$ have integral coefficients as well. In particular\footnote{It is easy to see that our different normalization does not affect these inequalities.}
\[  v_\p(\mathbf{D}_\p(f))\geqslant v_\p (f)\qquad \mathrm{and}\qquad  v_\p(\U_\p(f))\geqslant v_\p(f)\,. \]
\end{rem}

We prove next that some of the $\W_\d^\n$ commute with Hecke operators.

\begin{thm}\label{ThmComm}
With assumptions on $\m$ and $\p$ as above, let $f\in M_{k,m}(\G_0(\m))$ and fix an ideal $\d=(\delta)$ such that $\delta || \pi$. Then
\[  \W_\d^\m(\T_\p(f))=\T_\p(\W_\d^\m(f))\,. \]
\end{thm}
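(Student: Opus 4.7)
My plan is to run the standard normalizer/double-coset argument, adapted to the Drinfeld setting. Fix a representative $W := W_\d^\m$. Using Definition~\ref{DefHecke}, write
$$\T_\p(f) = P^{k-m}\sum_i f\mf\alpha_i,$$
where $\{\alpha_i\} = \{\alpha\}\cup\{\smatrix{1}{Q}{0}{P}:\deg Q<d\}$ (with $\alpha=\smatrix{P}{0}{0}{1}$) is a complete transversal for $\G_0(\m)\backslash\G_0(\m)\alpha\G_0(\m)$; here it is crucial that $\p\nmid\m$. Similarly, $\W_\d^\m(f) = f\mf W$.

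The heart of the proof is the claim that $W$ stabilizes this Hecke double coset, namely $W\alpha W^{-1}\in\G_0(\m)\alpha\G_0(\m)$. Section~\ref{AtLeInv} records that $W$ normalizes $\G_0(\m)$, so $W\G_0(\m)=\G_0(\m)W$. I would verify the claim by direct matrix computation of $W\alpha W^{-1}$, using $\delta^2 ad-\pi cb=\zeta\delta$ and the hypotheses $\delta\mid\pi$ and $(\pi,P)=1$: the upper entries land in $A$ once one writes the ``correcting'' quantity $X=(\pi/\delta)\,bc(P-1)/\zeta$, which lies in $A$ because $\pi/\delta\in A$; the lower-left entry takes the form $\pi cd(P-1)/\zeta$ and thus lies in $\pi A$. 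Together with $\det(W\alpha W^{-1})=P$, this puts the matrix inside $\G_0(\m)\alpha\G_0(\m)$.

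Granted this stability, decomposing $W\cdot\G_0(\m)\alpha\G_0(\m) = \G_0(\m)(W\alpha W^{-1})\G_0(\m)\cdot W = \G_0(\m)\alpha\G_0(\m)\cdot W$ into right $\G_0(\m)$-cosets yields a bijection $i\mapsto j(i)$ and elements $\gamma_i\in\G_0(\m)$ such that $W\alpha_i=\gamma_i\,\alpha_{j(i)}\,W$. Applying $\mf$, absorbing each $\gamma_i$ via the $\G_0(\m)$-invariance of $f$, and summing gives $\sum_i f\mf(W\alpha_i)=\sum_j f\mf(\alpha_j\,W)$; after multiplication by $P^{k-m}$ this is precisely $\T_\p(\W_\d^\m f)=\W_\d^\m(\T_\p f)$.

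The main obstacle is the matrix verification that $W\alpha W^{-1}$ really sits in the double coset: this is the one place where $(\pi,P)=1$ is genuinely used, and it is the reason the argument is clean for $\T_\p$ but not for $\U_\p$. If instead $\p\mid\m$, the Hecke operator becomes $\U_\p$, whose coset structure lacks the representative $\alpha$, the stability above breaks, and commutativity on the full space fails -- consistent with Proposition~\ref{ThmCommNew}, where $\W_\p^{\m\p}\circ\U_\p=\U_\p\circ\W_\p^{\m\p}$ is established only on the subspace of $\p$-newforms.
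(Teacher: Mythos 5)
Your strategy is the same core idea as the paper's — the normalizer element $W_\d^\m$ stabilizes the Hecke structure at $\p$ — but implemented in the abstract double-coset framework rather than by explicitly exhibiting the permutation of coset representatives. The paper's proof is hands-on: it fixes $W_\d^\m=\smatrix{\delta}{-\a}{\pi P^2}{\b\delta}$, shows by conjugation that $\mathbf{D}_\p\circ\W_\d^\m=\W_\d^\m\circ\mathbf{D}_\p$, and then, for each $Q$ with $\deg Q<d$, produces the unique $Q'$ with $Q\equiv\b Q'\pmod P$ together with an explicit alternate representative of $W_\d^\m$ witnessing $\smatrix{1}{Q}{0}{P}W_\d^\m=(W_\d^\m)'\smatrix{1}{Q'}{0}{P}$. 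Your version packages exactly this permutation into the general statement that $W$ preserves the double coset $\G_0(\m)\alpha\G_0(\m)$. Your computation of $W\alpha W^{-1}$, with the correcting term $X=(\pi/\delta)bc(P-1)/\zeta$ giving $P+X$ and $1-X$ on the diagonal, $ab(1-P)/\zeta$ above, and $\pi cd(P-1)/\zeta$ below, is correct.

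There is, however, a genuine gap at the step you flag as the heart: after checking that $W\alpha W^{-1}\in M_2(A)$ has lower-left entry in $\pi A$ and determinant $P$, you conclude ``this puts the matrix inside $\G_0(\m)\alpha\G_0(\m)$.'' That conclusion is true but does not follow from the entry and determinant conditions alone; it is exactly the single-double-coset lemma for $\G_0(\m)$ at a prime $\p\nmid\m$ (the set of integral matrices with lower-left in $\pi A$ and determinant a unit times $P$ is a single $\G_0(\m)$-double-coset, by an elementary-divisor/Smith-normal-form argument combined with $(\pi,P)=1$ to land the adjusting factors inside $\G_0(\m)$). You must either prove this or cite it, since it is the nontrivial content of the argument. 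Relatedly, you attribute the use of $(\pi,P)=1$ to ``the matrix verification'' — but the arithmetic showing the entries of $W\alpha W^{-1}$ lie in $A$ and the lower-left in $\pi A$ uses only $\delta\|\pi$, not coprimality with $P$; the hypothesis $(\pi,P)=1$ enters precisely in the omitted double-coset lemma. This is also why the paper's explicit approach is preferable here: by constructing $Q'$ and the alternate representative directly, it avoids invoking any coset-structure theorem. Your closing remark about $\U_\p$ and the restricted commutativity on newforms (Proposition \ref{ThmCommNew}) is an accurate and useful observation.
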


\begin{proof}
Since $\dfrac{\pi}{\delta}P^2$ and $\delta$ are coprime there exist $\a,\b\in A$ such that $\a \dfrac{\pi}{\delta}P^2 +\b \delta=1$. Then $\a\pi P^2+\b \delta^2=\delta$
and we put
\[   W_\d^\m=\matrix{\delta}{-\a}{\pi P^2}{\b \delta}. \]
We have that $\mathbf{D}_\p^{-1}\circ \W_\d^\m \circ \mathbf{D}_\p$ is represented by
\begin{align*}
\matrix{P}{0}{0}{1}\matrix{\delta}{-\a}{\pi P^2}{\b \delta}\matrix{\frac{1}{P}}{0}{0}{1} & =\matrix{\delta }{-\a P}{\pi P}{\b \delta}= W_\d^{\m}.
\end{align*} 
Therefore, we get  that 
\begin{equation}\label{CommD}
\mathbf{D}_\p \circ \W_\d^\m =\W_\d^\m\circ \mathbf{D}_\p\,.
\end{equation}
Observe now that, since $\b$ is coprime with $P$, for any $Q\in A$ with $\deg Q< d$ there exists a uniquely determined $Q'\in A$ with $\deg Q'< d$ such that $Q\equiv \b Q'\pmod{P}$. Moreover
\[  \matrix{1}{Q}{0}{P}\matrix{\b\delta}{-\a P}{\pi P}{\delta}=
\matrix{\delta\left(\beta +\frac{\pi}{\delta} P Q\right)}{\delta\frac{Q-\beta Q'}{P}-\pi QQ'-\a}{\pi P^2}{\delta\left(1-\frac{\pi}{\delta}PQ'\right) }
 \matrix{1}{Q'}{0}{P}\,. \]
Observe that the first matrix on the right has integral entries and is one of the representatives for $\W_\d^\m$. Therefore 
\begin{equation}\label{CommU}
\sum_{\begin{subarray}{c} Q\in A\\\deg Q< d  \end{subarray}}( f\mf \matrix{1}{Q}{0}{P}W_\d^\m)= 
\sum_{\begin{subarray}{c} Q'\in A\\\deg Q'< d  \end{subarray}}(f\mf W_\d^\m\matrix{1}{Q'}{0}{P})\,.
\end{equation}
Putting together equations \eqref{CommD} and \eqref{CommU} we get our claim.
\end{proof}

\begin{cor}\label{CorComm}
With assumptions as above on $\m$ and $\p$, if $f\in M_{k,m}(\G_0(\m\p))$  and $\d=(\delta)$ is such that $\delta || \pi$ we have
\[ \W_\d^{\m\p} (\U_\p(f) )=\U_\p(\W_\d^{\m\p}(f)) \,. \]
\end{cor}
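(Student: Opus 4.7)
My plan is to re-run the matrix manipulations from the proof of Theorem \ref{ThmComm}, now reinterpreting the relevant matrices as representatives of the Atkin--Lehner involution at the larger level $\m\p$ rather than at $\m$. The starting observation is that the matrix
\[
\matrix{\b\delta}{-\a P}{\pi P}{\delta},
\]
chosen there to represent $W_\d^\m$ with $\a,\b\in A$ satisfying $\a\pi P^2+\b\delta^2=\delta$, is in fact simultaneously a legitimate representative of $W_\d^{\m\p}$: its bottom-left entry $\pi P$ is a multiple of the generator of $\m\p$, its top-left and bottom-right entries are divisible by $\delta$, and its determinant equals $\delta$ (so $\zeta=1$ in the general form $\matrix{\delta a}{b}{\pi P c}{\delta d}$). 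Observe that $\d||\m\p$ since $\delta||\pi$ and $(\pi,P)=1$, so $W_\d^{\m\p}$ makes sense.

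Next I would invoke the matrix identity from that proof, which is purely an identity in $GL_2(K)$ and does not depend on any group invariance:
\[
\matrix{1}{Q}{0}{P}\matrix{\b\delta}{-\a P}{\pi P}{\delta} = \mathrm{M}(Q,Q')\matrix{1}{Q'}{0}{P},
\]
where $Q'$ is the unique element of $A$ with $\deg Q'<d$ satisfying $Q\equiv \b Q' \pmod P$, and $\mathrm{M}(Q,Q')$ is the integer matrix of the form $\matrix{\delta(\b+\tfrac{\pi}{\delta}PQ)}{*}{\pi P^2}{\delta(1-\tfrac{\pi}{\delta}PQ')}$ displayed there. The key verification---really the only point where level $\m\p$ enters---is that $\mathrm{M}(Q,Q')$ is also a representative of $W_\d^{\m\p}$: its bottom-left entry $\pi P^2=\pi P\cdot P$ is divisible by $\pi P$ (with $c=P$), its top-left and bottom-right entries are divisible by $\delta$, and its determinant is $\delta$. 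Since $f$ is $\G_0(\m\p)$-invariant and both matrices normalize $\G_0(\m\p)$ with the same scalar $\zeta=1$, this forces $f\mf \mathrm{M}(Q,Q')=f\mf W_\d^{\m\p}$.

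Summing over $Q$ (or equivalently over $Q'$) and multiplying by $P^{k-m}$ then gives
\[
\U_\p(f)\mf W_\d^{\m\p}=P^{k-m}\sum_{Q} f\mf \matrix{1}{Q}{0}{P}W_\d^{\m\p}=P^{k-m}\sum_{Q'}(f\mf W_\d^{\m\p})\mf\matrix{1}{Q'}{0}{P}=\U_\p(W_\d^{\m\p}(f)),
\]
which is exactly the claim. The main delicate point I anticipate is confirming that $\mathrm{M}(Q,Q')$ is a valid $W_\d^{\m\p}$-representative with the correct scalar $\zeta=1$; once this divisibility-plus-determinant check is in place, the corollary reduces to a rewriting of the matrix identity already exploited for Theorem \ref{ThmComm}.
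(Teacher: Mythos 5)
Your proposal is correct and matches the paper's own proof, which simply says ``observe that matrices used in the previous proof for $\W_\d^\m$ are also appropriate for $\W_\d^{\m\p}$''; you have merely spelled out that observation, verifying that the matrix $\smatrix{\b\delta}{-\a P}{\pi P}{\delta}$ and the matrix $\mathrm{M}(Q,Q')$ from the identity in Theorem~\ref{ThmComm} both satisfy the divisibility and determinant conditions for $W_\d^{\m\p}$. Your additional note that only the $\U_\p$-part of the theorem's argument (not the $\mathbf{D}_\p$-conjugation step) is needed here is also accurate.
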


\begin{proof}
Just observe that matrices used  in the previous proof for $\W_\d^\m$ are also appropriate for $\W_\d^{\m\p}$.
\end{proof}

\subsection{Twisted trace maps}\label{SubSecTwist}
Keep notations and assumptions on $\m$ and $\p$ as in the previous section, namely $(\m,\p)=(1)$, and let $R^{\m\p}_\m$ be a set of representatives for $\G_0(\m\p)\backslash \G_0(\m)$. 

 \begin{defin}
 {\em For a $f\in M_{k,m}(\G_0(\m\p))$ we define the}  trace map {\em
 as}
 \begin{align*}  \Tr^{\m\p}_\m:M_{k,m}(\G_0(\m\p)) & \to M_{k,m}(\G_0(\m))\\
 f &\mapsto \sum_{\g\in R^{\m\p}_\m}f\mf \g\,.
  \end{align*}
 \end{defin}
 
 As explained in \cite[Remark 2.9]{BV4} and originally observed by Serre in \cite{Se}, the trace alone is not enough to isolate newforms. As a consequence we give the following
 
 \begin{defin}\label{dtwistedTr}
 {\em For a $f\in M_{k,m}(\G_0(\m\p))$ and any divisor $\d$ of $\m\p$ such that $\d||\m\p$, we define  the}  $\d$-twisted trace map {\em
 as}
 \begin{align*}  \Tr^{\m\p (\d)}_{\m}:= \Tr_\m^{\m\p}\circ \W_\d^{\m\p}:M_{k,m}(\G_0(\m\p)) & \to M_{k,m}(\G_0(\m))\\
 f &\mapsto \sum_{\g\in R^{\m\p}_\m}(f\mf W_\d^{\m\p})\mf \g\,.
  \end{align*}
 \end{defin}
 
 Since in most of the paper we shall move between the levels $\m\p$ and $\m$, we shall simply write $\Tr^{(\d)}$ for $\Tr^{\m\p (\d)}_{\m}$ and use $\Tr$ for 
 $\Tr^{\m\p}_\m=\Tr^{\m\p(1)}_\m$, returning to the original notations when/if necessary. Moreover, whenever we write $\W_\d^{\m\p}$ we tacitly assume that it is well defined, i.e. 
 $\d |\m\p$  and  $(\d,\frac{\m\p}{\d})=(1)$.

\begin{lem}\label{LemTr}
Let $f\in M_{k,m}(\G_0(\m\p))$, then
\begin{equation}\label{eqTr} \Tr(f)=f+P^{-m}\U_\p(f\mf W_\p^{\m\p})\,. \end{equation}
\end{lem}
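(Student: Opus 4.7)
The plan is to compute $\Tr(f) = \sum_{\g \in R^{\m\p}_\m} f\mf\g$ directly by choosing a clever set of representatives for $\G_0(\m\p)\backslash\G_0(\m)$. Since $(\m,\p)=(1)$ and $\p$ is prime of norm $q^d$, the index is $[\G_0(\m):\G_0(\m\p)] = q^d + 1$, so beyond the identity I need one representative for each nontrivial class in $\P^1(A/\p)$. The shape of the right-hand side of \eqref{eqTr} strongly suggests that these extra representatives should come from the matrices $W_\p^{\m\p}\smatrix{1}{Q}{0}{P}$ that appear inside $\U_\p(f\mf W_\p^{\m\p})$, suitably rescaled so as to land in $\G_0(\m)$.

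Concretely, I fix a representative $W = \smatrix{Pa}{b}{\pi Pc}{Pd}$ of $W_\p^{\m\p}$ with $Pad-\pi bc=\zeta\in\F_q^*$, and for each $Q\in A$ with $\deg Q<d$ I set $M_Q := \frac{1}{P}W\smatrix{1}{Q}{0}{P} = \smatrix{a}{aQ+b}{\pi c}{\pi cQ+Pd}$. This matrix has unit determinant $\zeta$ and bottom-left entry divisible by $\pi$, hence lies in $\G_0(\m)$. My candidate set is $R := \{I\}\cup\{M_Q : \deg Q<d\}$, which has the correct cardinality $1+q^d$.

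The main technical step is to prove that $R$ genuinely represents all the $q^d+1$ cosets. For this I use the map $\varphi : \G_0(\m) \to \P^1(A/\p)$ sending $\g$ to the class of its bottom row $(c:d) \bmod P$; left multiplication by any element of $\G_0(\m\p)$ preserves this class (its bottom-left entry is a multiple of $P$), so $\varphi$ is well-defined on cosets. A short bottom-left-entry computation for a product $\g_2\g_1^{-1}$ with $\varphi(\g_1)=\varphi(\g_2)$ shows it is injective. Then $I\mapsto (0:1)$, while the relation $Pad-\pi bc = \zeta$ forces $c\not\equiv 0\pmod P$, so $\pi c\in (A/\p)^*$ and $M_Q\mapsto (\pi c:\pi cQ+Pd)=(1:Q)\bmod P$; as $Q$ runs over degree-$<d$ polynomials these classes exhaust $\P^1(A/\p)\setminus\{(0:1)\}$. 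I expect this coset-counting via $\P^1(A/\p)$ to be the main obstacle, since everything else is formal.

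Once $R$ is justified, the rest is bookkeeping. Factoring $W\smatrix{1}{Q}{0}{P} = (P\cdot I)\,M_Q$ and observing that the scalar $P\cdot I$ acts on a weight $k$, type $m$ form by the scalar $P^{2m-k}$, one gets $f\mf M_Q = P^{k-2m}\bigl(f\mf W\smatrix{1}{Q}{0}{P}\bigr)$. Summing over $Q$ and comparing with the definition $\U_\p(f\mf W) = P^{k-m}\sum_Q f\mf W\smatrix{1}{Q}{0}{P}$ yields $\sum_Q f\mf M_Q = P^{-m}\U_\p(f\mf W_\p^{\m\p})$; adding $f\mf I = f$ gives exactly \eqref{eqTr}.
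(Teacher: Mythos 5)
Your proof is correct, and it takes a genuinely different route from the one in the paper. The paper's argument is algebraic and built on top of prior machinery: it starts from the identity $\U_\p(f\mf W_{\m\p}^{\m\p}) = P^m\Tr(f)\mf W_\m^\m - P^m f\mf W_\m^{\m\p}$ (Eq.~(12) of \cite{BV4}), applies Corollary~\ref{CorComm} to move $\W_\m^{\m\p}$ past $\U_\p$, and then simplifies the resulting product $W_\m^\m W_\m^{\m\p}$ using the $\G_0(\m)$-invariance of $\Tr(f)$. Your argument instead re-derives the formula from scratch by exhibiting the explicit set of coset representatives $\{I\}\cup\{M_Q\}$ for $\G_0(\m\p)\backslash\G_0(\m)$, verifying completeness via the bijection with $\P^1(A/\p)$, and then matching terms with the definition of $\U_\p$. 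Your coset-counting step is correct: the map $\g\mapsto(c:d)\bmod P$ is well-defined on cosets because the bottom-left entry of any $\g_0\in\G_0(\m\p)$ vanishes mod $P$ (and $d_1$ is a unit mod $P$), and it is injective because $\varphi(\g_1)=\varphi(\g_2)$ forces $c_2d_1-d_2c_1\equiv 0\pmod P$, hence $\pmod{\pi P}$ by coprimality; the images $(0:1)$ and $(1:Q)$ then exhaust all $q^d+1$ classes. The bookkeeping with the scalar matrix $P\cdot I$ and the normalization $P^{k-m}$ in $\U_\p$ also checks out, yielding exactly the factor $P^{-m}$. What your route buys is self-containedness: no appeal to Eq.~(12) from \cite{BV4} or to Corollary~\ref{CorComm}. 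What the paper's route buys is brevity given the machinery already in place, and it fits the theme of the section (everything phrased through Atkin--Lehner manipulations). Note the paper itself acknowledges after the lemma that a direct-representatives computation, in the spirit of \cite{DK} and \cite{BV4}, is an alternative; your argument is essentially that alternative, made fully explicit.
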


\begin{proof}
Let $\a,\b\in A$ verify $\a P+\b \pi=1$ and note that 
\[ \matrix{\a}{-\b}{\pi}{P}W_\m^\m=\matrix{-\b\pi}{-\a}{\pi P}{-\pi} \]
is a representative for $\W_\m^{\m\p}$. Then, 
by \cite[Eq. (12)]{BV4}, we have that
\[   \U_\p(f\mf W_{\m\p}^{\m\p})= P^m \Tr(f)\mf W_\m^\m-P^m f\mf W_\m^{\m\p}. \]
Since the involutions commute, using Corollary \ref{CorComm} we get
\begin{align*}
\U_\p(f\mf W_{\p}^{\m\p})\mf W_\m^{\m\p}= P^m \Tr(f)\mf W_\m^\m-P^m f\mf W_\m^{\m\p}.
\end{align*}
Apply $\W_\m^{\m\p}$ to both members to obtain
\[  \pi^{2m-k}\U_\p(f\mf W_{\p}^{\m\p})=P^m \Tr(f)\mf W_\m^\m W_\m^{\m\p}-\pi^{2m-k}P^m f\,.\]
Finally, with $\a,\b$ as above, we have
\[ W_\m^\m W_\m^{\m\p} =\matrix{0}{-1}{\pi}{0}\matrix{\pi}{-\a}{\pi P}{\b \pi}=\matrix{\pi}{0}{0}{\pi}\underbrace{\matrix{-P}{-\b}{\pi}{-\a}}_{\in \G_0(\m)} \,.\]
Since $\Tr(f)$ is $\G_0(\m)$-invariant, the lemma follows.
\end{proof}

\begin{rem}
An alternative proof of the above formula can be found  in \cite[Proposition 4.6]{DK}, but be aware of the different normalization used for the operator $\U_\p$. 
As in \cite{BV4}, they used a direct computation approach but with a different system of representatives: of course, the trace does not depend on the chosen 
$R_\m^{\m\p}.$ 
\end{rem}

By Lemma \ref{LemTr} we have:
\begin{align}\label{twist}  
\Tr^{(\d)}(f) :& =   \Tr(f\mf W_\d^{\m\p}) = f\mf W_\d^{\m\p} +P^{-m}\U_\p(f\mf W_\p^{\m\p} W_\d^{\m\p}) \nonumber \\
\ & =\left\{\begin{array}{ll}f\mf W_\d^{\m\p} +P^{-m}\U_\p(f\mf W_{\d\p}^{\m\p} ) & {\rm if}\ \p \nmid \d \\
\ & \\
f\mf W_\d^{\m\p} +P^{m-k}\U_\p(f\mf W_{\frac{\d}{\p}}^{\m\p} ) & {\rm if}\ \p | \d \end{array}\right. \ ,
 \end{align}
i.e., applying Corollary \ref{CorComm},
\begin{align}\label{twist2}
\Tr^{(\d)} = \left\{\begin{array}{ll} \W_\d^{\m\p} +P^{-m}\U_\p\circ \W_{\d\p}^{\m\p}=
 \W_\d^{\m\p} +P^{-m}\W_{\d}^{\m\p}\circ\U_\p\circ \W_\p^{\m\p} & {\rm if}\ \p \nmid \d \\
 \ & \\
\W_\d^{\m\p} +P^{m-k}\U_\p\circ \W_{\frac{\d}{\p}}^{\m\p}=
\W_\d^{\m\p} +P^{m-k}\W_{\frac{\d}{\p}}^{\m\p}\circ\U_\p & {\rm if}\ \p | \d \end{array}\right. \ ,
\end{align}

\begin{prop}\label{PropKerTraces}
With notations as above, we have:
\begin{align*} \W^{\m\p}_\m\circ \Tr^{(\d)}= \left\{ \begin{array}{ll} \delta^{2m-k} \Tr^{(\frac{\m}{\d})} &  {\rm if}\ \p \nmid \d \\
\ & \\
(\frac{\delta}{P})^{2m-k} \Tr^{(\frac{\m\p^2}{\d})} & {\rm if}\ \p | \d
\end{array} \right.\,.
\end{align*}
\end{prop}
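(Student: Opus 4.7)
The plan is to expand $\Tr^{(\d)}$ by means of the explicit formula \eqref{twist2}, apply $\W_\m^{\m\p}$ term by term, and then recognize the outcome as \eqref{twist2} applied to the twist that appears on the right-hand side. Because \eqref{twist2} has two branches ($\p\nmid\d$ versus $\p\mid\d$), the argument splits into the same two cases, to be treated in parallel but kept formally separate.

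In each case, applying $\W_\m^{\m\p}$ produces a pure Atkin--Lehner summand $\W_\m^{\m\p}\circ\W_\d^{\m\p}$ and a summand involving $\U_\p$. For the first, I would use the composition formula from Section \ref{AtLeInv} together with the elementary remark that a scalar matrix $\smatrix{\delta_0}{0}{0}{\delta_0}$ acts on forms of weight $k$ and type $m$ by multiplication by $\delta_0^{2m-k}$. When $\p\nmid\d$ one has $(\m,\d)=\d$, which yields $\delta^{2m-k}\W_{\m/\d}^{\m\p}$; when $\p\mid\d$, writing $\delta=\delta'P$ one instead gets $(\m,\d)=\d'$, giving $(\delta/P)^{2m-k}\W_{\m\p^2/\d}^{\m\p}$. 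The $\U_\p$-summand is handled identically, after first commuting $\W_\m^{\m\p}$ past $\U_\p$: this commutation is legitimate by Corollary \ref{CorComm}, since the condition $\pi||\pi$ trivially holds, so $\m$ is an allowed twist for that corollary.

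It remains to reassemble the two summands produced on the left into $\delta^{2m-k}\Tr^{(\m/\d)}$ or $(\delta/P)^{2m-k}\Tr^{(\m\p^2/\d)}$ by reapplying \eqref{twist2} to the new index. The main bookkeeping step---essentially the only obstacle---arises in the case $\p\mid\d$: one must check that $\m\p^2/\d=\m\p/\d'$ is still divisible by $\p$, so that the second branch of \eqref{twist2} applies to the target, and that $(\m\p^2/\d)/\p=\m\p/\d$ matches the index of $\W$ produced on the left. Once these two ideal identities are in hand, the matching of the scalars $\delta^{2m-k}$, $(\delta/P)^{2m-k}$ and of the $P$-powers $P^{-m}$, $P^{m-k}$ coming from \eqref{twist2} is completely routine.
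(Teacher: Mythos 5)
Your proof is correct and follows exactly the route the paper intends: the paper's proof consists of the single sentence ``Just use \eqref{twist2},'' and you have supplied precisely the bookkeeping that unpacks it, namely expanding both sides via \eqref{twist2}, commuting $\W_\m^{\m\p}$ past $\U_\p$ via Corollary~\ref{CorComm} (with $\pi\,\|\,\pi$), composing Atkin--Lehner matrices by the displayed product rule with the scalar $\smatrix{\delta_0}{0}{0}{\delta_0}$ contributing $\delta_0^{2m-k}$, and matching the resulting indices. The ideal computations $(\m,\d)=\d$ (resp.\ $\d'$) and $(\m\p^2/\d)/\p=\m\p/\d=\m/\d'$ are all as you state, so no gap remains.
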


\begin{proof} Just use \eqref{twist2}. \end{proof}

An immediate consequence of the above proposition is 
\begin{align}\label{EqKerTr}
Ker(\Tr^{(\d)}) = \left\{ \begin{array}{ll} Ker(\Tr^{(\frac{\m}{\d})}) & {\rm if}\ \p \nmid \d \\
\ & \\
 Ker(\Tr^{(\frac{\m\p^2}{\d})}) & {\rm if}\ \p | \d
\end{array} \right.\,.
\end{align}
In particular, $Ker(\Tr^{(\m\p)})=Ker(\Tr^{(\p)})$.

 \subsection{Drinfeld $\p$-newforms}
In \cite{BV4} the authors provide the following:

\begin{defin}\label{NewWrDef}
The space of $\p$-newforms of level $\m\p$, denoted by $S_{k,m}^{\p-new}(\G_0(\m\p))$,  is given by $Ker(\Tr)\cap Ker(\Tr^{(\m\p)})$.
\end{defin}

By equation \eqref{EqKerTr} it immediately follows that we have an equivalent definition for $\p$-newforms, namely
\[ S_{k,m}^{\p-new}(\G_0(\m\p))  = Ker(\Tr)\cap Ker(\Tr^{(\p)}) \] 
and this is a $W^{\m\p}_\p$-invariant subset of $S_{k,m}(\G_0(\m\p))$. As for the action of $W^{\m\p}_\p$ we can say more in the next proposition.

\begin{prop}\label{ThmCommNew}
The involution $\W^{\m\p}_\p$ and the operator  $\U_\p$ commute on the space of $\p$-newforms of level
$\m\p$.
\end{prop}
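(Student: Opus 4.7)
The plan is to exploit the two vanishing conditions defining a $\p$-newform, together with Lemma \ref{LemTr}, to show that on newforms both $\U_\p$ and $\W_\p^{\m\p}$ (up to scalar) send $f$ to a scalar multiple of the other's image; then the commutation reduces to an equality of scalars.

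Fix $f\in S_{k,m}^{\p-new}(\G_0(\m\p))$, so $\Tr(f)=0$ and $\Tr^{(\p)}(f)=0$. Applying Lemma \ref{LemTr} directly to $f$ yields
\[ 0=\Tr(f)=f+P^{-m}\U_\p(\W_\p^{\m\p}(f)), \]
so $\U_\p(\W_\p^{\m\p}(f))=-P^{m}f$. This handles one of the two sides we need to compare.

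For the other side I would apply Lemma \ref{LemTr} to the element $g:=\W_\p^{\m\p}(f)\in M_{k,m}(\G_0(\m\p))$, noting that $\Tr(g)=\Tr^{(\p)}(f)=0$. This gives
\[ 0=g+P^{-m}\U_\p\bigl((\W_\p^{\m\p})^2(f)\bigr). \]
Using the involution formula $(\W_\p^{\m\p})^2(f)=(\zeta P)^{2m-k}f$ from Section \ref{AtLeInv}, this reads
\[ \W_\p^{\m\p}(f)+P^{-m}(\zeta P)^{2m-k}\U_\p(f)=0, \]
and hence $\U_\p(f)=-\zeta^{k-2m}P^{k-m}\W_\p^{\m\p}(f)$.

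Now apply $\W_\p^{\m\p}$ to both sides of this last identity and use $(\W_\p^{\m\p})^2(f)=(\zeta P)^{2m-k}f$ once more:
\[ \W_\p^{\m\p}(\U_\p(f))=-\zeta^{k-2m}P^{k-m}\cdot(\zeta P)^{2m-k}f=-P^{m}f=\U_\p(\W_\p^{\m\p}(f)), \]
which is exactly the desired commutation. The main (mild) obstacle is simply being careful with the scalar $\zeta\in\F_q^*$ arising from the square of the Atkin--Lehner involution, but the powers of $\zeta$ cancel cleanly because the same $\zeta$ appears in both computations; the two Lemma \ref{LemTr} applications are otherwise routine.
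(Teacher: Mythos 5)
Your proof is correct and follows essentially the same strategy as the paper: both use the two vanishing conditions $\Tr(f)=0$ and $\Tr^{(\p)}(f)=0$ to express $\U_\p(\W_\p^{\m\p}(f))$ and $\W_\p^{\m\p}(\U_\p(f))$ as the same scalar multiple of $f$ (namely $-P^m f$). The paper is terser, invoking equation \eqref{twist2} for $\Tr^{(\p)}$ directly (which reads $\Tr^{(\p)}=\W_\p^{\m\p}+P^{m-k}\U_\p$), whereas you re-apply Lemma \ref{LemTr} to $g=\W_\p^{\m\p}(f)$ and then use $(\W_\p^{\m\p})^2$; this introduces the extra scalar $(\zeta P)^{2m-k}$, which you correctly track and cancel. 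One small remark: since $M_{k,m}=0$ unless $k\equiv 2m\pmod{q-1}$, one always has $\zeta^{2m-k}=1$ for $\zeta\in\F_q^*$, so the $\zeta$-bookkeeping is automatically trivial — which is why the paper silently drops it.
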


\begin{proof}
Let $f\in S_{k,m}^{\p-new}(\G_0(\m\p))$. By $\Tr^{(\p)}(f)=0$ and $\Tr(f)=0$ we get
\begin{equation*}  \U_\p(f) \mf W_\p^{\m\p}= -P^m f =  \U_\p(f\mf W_\p^{\m\p})\end{equation*}
and the claim follows easily. \end{proof}

Replacing $W_{\m\p}^{\m\p}$ with $W^{\m\p}_\p$, i.e. $\Tr^{(\m\p)}$ with $\Tr^{(\p)}$, makes formulas
easier to handle and  we can now obtain the full  generalization of \cite[Theorem 5.1]{BV3}.

\begin{thm}\label{DirSum}
The map $\D:=Id-P^{k-2m}(\Tr^{(\p)})^2$ is bijective on $S_{k,m}(\G_0(\m\p))$ if and only if we have the direct sum decomposition 
$S_{k,m}(\G_0(\m\p))=S_{k,m}^{\p-new}(\G_0(\m\p))\oplus S_{k,m}^{\p-old}(\G_0(\m\p))$.
\end{thm}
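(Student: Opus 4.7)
The plan is to reduce $\D$ to an explicit (upper-triangular) operator on the oldform space via the isomorphism $(\mathbf{D}_1, \mathbf{D}_\p): S_{k,m}(\G_0(\m))^2 \xrightarrow{\cong} S_{k,m}^{\p-old}(\G_0(\m\p))$, and then to identify a single linear operator on $S_{k,m}(\G_0(\m))$ that simultaneously governs the bijectivity of $\D$, the triviality of $S_{k,m}^{\p-new} \cap S_{k,m}^{\p-old}$, and the exhaustion $S_{k,m}^{\p-new} + S_{k,m}^{\p-old} = S_{k,m}(\G_0(\m\p))$. Throughout, I interpret $(\Tr^{(\p)})^2$ on $S_{k,m}(\G_0(\m\p))$ by composing with the inclusion $\mathbf{D}_1$ between applications.

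Two structural observations get things started: $\D$ is the identity on $S_{k,m}^{\p-new}(\G_0(\m\p))$ because $\Tr^{(\p)}$ vanishes there by Definition \ref{NewWrDef}, while the image of $(\Tr^{(\p)})^2$ sits inside $\mathbf{D}_1(S_{k,m}(\G_0(\m))) \subseteq S_{k,m}^{\p-old}(\G_0(\m\p))$. The relation $\D(f) = 0 \Leftrightarrow f = P^{k-2m}(\Tr^{(\p)})^2(f)$ therefore forces $Ker(\D) \subseteq S_{k,m}^{\p-old}(\G_0(\m\p))$.

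The computational core is to evaluate $\Tr$ and $\Tr^{(\p)}$ on oldforms using \eqref{twist2}, Lemma \ref{DW} (which gives $\W_\p^{\m\p}\circ \mathbf{D}_1 = \mathbf{D}_\p$), the square relation $(\W_\p^{\m\p})^2 = P^{2m-k}Id$ up to a unit in $\F_q^*$, the vanishing $\U_\p\circ \mathbf{D}_\p = 0$, and the relation $\U_\p\circ \mathbf{D}_1 = \T_\p - P^{k-m}\mathbf{D}_\p$ extracted from Definition \ref{DefHecke}. A short calculation yields
\[ \Tr(\mathbf{D}_1(f_1) + \mathbf{D}_\p(f_2)) = f_1 + P^{m-k}\T_\p(f_2), \]
\[ \Tr^{(\p)}(\mathbf{D}_1(f_1) + \mathbf{D}_\p(f_2)) = P^{m-k}\T_\p(f_1) + P^{2m-k}f_2, \]
and iterating the second gives
\[ \D(\mathbf{D}_1(f_1) + \mathbf{D}_\p(f_2)) = \mathbf{D}_1\bigl((Id - P^{-k}\T_\p^2)(f_1) - P^{m-k}\T_\p(f_2)\bigr) + \mathbf{D}_\p(f_2). \]
Combined with $Ker(\D) \subseteq S_{k,m}^{\p-old}(\G_0(\m\p))$, the upper-triangular shape at once gives $Ker(\D) \cong Ker(Id - P^{-k}\T_\p^2)$ inside $S_{k,m}(\G_0(\m))$. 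A parallel calculation requiring $\Tr(g) = \Tr^{(\p)}(g) = 0$ for $g = \mathbf{D}_1(g_1) + \mathbf{D}_\p(g_2) \in S_{k,m}^{\p-old}$ produces $S_{k,m}^{\p-new} \cap S_{k,m}^{\p-old} \cong Ker(Id - P^{-k}\T_\p^2)$ via projection onto $g_2$.

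The part I expect to be the main obstacle is the equality $S_{k,m}^{\p-new} + S_{k,m}^{\p-old} = S_{k,m}(\G_0(\m\p))$, which does not follow from the intersection calculation alone. Given $f \in S_{k,m}(\G_0(\m\p))$ I would seek $g \in S_{k,m}^{\p-old}$ so that $\Tr(f - g) = \Tr^{(\p)}(f - g) = 0$; writing $g = \mathbf{D}_1(g_1) + \mathbf{D}_\p(g_2)$ and eliminating $g_1$ from the resulting $2 \times 2$ linear system reduces the problem to
\[ (Id - P^{-k}\T_\p^2)(g_2) = P^{k-2m}\Tr^{(\p)}(f) - P^{-m}\T_\p(\Tr(f)), \]
which is solvable for every $f$ precisely when $Id - P^{-k}\T_\p^2$ is surjective on $S_{k,m}(\G_0(\m))$, equivalently injective by finite-dimensionality. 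All three conditions thus collapse to $Ker(Id - P^{-k}\T_\p^2) = 0$, yielding the stated equivalence.
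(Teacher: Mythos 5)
Your proof is correct, and it takes a genuinely different route from the paper's. The paper mirrors the strategy of \cite[Theorem~5.1]{BV3}: assuming $\D$ bijective, one solves directly for the old and new components of an arbitrary $h$ (writing $f=\mathcal{D}^{-1}(\Tr(h)-P^{k-2m}(\Tr^{(\p)})^2(h))$, $g=\mathcal{D}^{-1}(P^{k-2m}(\Tr^{(\p)}(h)-(\Tr^{(\p)})^2(h)))$); assuming the direct sum, one takes a nonzero $f\in\mathrm{Ker}(\D)$, observes $f\in\mathrm{Im}(\mathbf{D}_1)$, and shows $\U_\p(f)$ is simultaneously old and new, contradicting the direct sum. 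You instead transport everything through the isomorphism $(\mathbf{D}_1,\mathbf{D}_\p)$ onto $S_{k,m}(\G_0(\m))^2$, compute $\Tr$ and $\Tr^{(\p)}$ on oldforms in closed form, and exhibit a single operator $Id-P^{-k}\T_\p^2$ on $S_{k,m}(\G_0(\m))$ whose injectivity is simultaneously equivalent to injectivity of $\D$, to $S_{k,m}^{\p-new}\cap S_{k,m}^{\p-old}=0$, and (via solvability of the resulting $2\times 2$ linear system) to $S_{k,m}^{\p-new}+S_{k,m}^{\p-old}=S_{k,m}(\G_0(\m\p))$. This is a more structural argument: it makes explicit that the obstruction lives entirely on level $\m$ and is governed by $\T_\p$ there (essentially the $\p$-Euler factor $1-P^{-k}\T_\p^2$), whereas the paper's proof is more elementary and self-contained, requiring only the two identities $\Tr\circ\mathbf{D}_\p=P^{m-k}\T_\p$ and $\U_\p(f)=\T_\p(f)-P^{k-m}\mathbf{D}_\p(f)$ without computing the full action on oldforms. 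One small caveat worth adding to your writeup: in the sum step you claim solvability ``precisely when'' $Id-P^{-k}\T_\p^2$ is surjective; the forward implication (surjectivity $\Rightarrow$ solvability for all $f$) is what the theorem actually needs, since the converse direction of the theorem is already handled by the intersection equivalence, so you should phrase it as one-way to avoid overclaiming.
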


\begin{proof}
The argument is exactly as in \cite[Theorem 5.1]{BV3}, but we shall sketch out the main points to highlight the consequences of choosing another twist for the trace.

Assume that $\mathcal{D}$ is bijective and let $h=f + \mathbf{D}_{\p}(g)=f+\W_\p^{\m\p}g\in S_{k,m}(\G_0(\m\p))$, with $f,g \in S_{k,m}(\G_0(\m))$, be old and new. Therefore, 
$\Tr(h)=\Tr^{(\p)}(h)=0$ yield $f=-\Tr^{(\p)}(g)$ and $g-P^{k-2m}(\Tr^{(\p)})^2g =0$;  the bijectivity of $\mathcal{D}$ yields $g=f=0$, i.e. $S_{k,m}^{\p-new}(\G_0(\m\p))\cap S_{k,m}^{\p-old}(\G_0(\m\p))=0$.\\
For the sum condition, for any $h\in S_{k,m}(\G_0(\m\p))$ we look for $f,g \in S_{k,m}(\G_0(\m))$ such that $h-f-\mathbf{D}_\p (g)\in S_{k,m}^{\p-new}(\G_0(\m\p))$.
Working out the computations for $\Tr(h-f-\mathbf{D}_\p (g))=\Tr^{(\p)}(h-f-\mathbf{D}_\p (g))=0$, we find
\begin{itemize}
\item {$f=\mathcal{D}^{-1}(\Tr(h)-P^{k-2m}(\Tr^{(\p)})^2(h))$;}
\item {$g=\mathcal{D}^{-1}(P^{k-2m}(\Tr^{(\p)}(h)-(\Tr^{(\p)})^2(h))) $.}
\end{itemize}

Assume now that $S_{k,m}(\G_0(\m\p))=S_{k,m}^{\p-new}(\G_0(\m\p))\oplus S_{k,m}^{\p-old}(\G_0(\m\p))$ and let $f\in Ker(\mathcal{D})-\{0\}$. Then $f$ is in the image of a trace map, i.e. 
$f\in Im(\mathbf{D}_1)$ and $\U_\p(f)$ is old and non-zero. Moreover, noticing that $\Tr\circ \mathbf{D}_\p = P^{m-k}\T_\p$, we have
\begin{align*}
\Tr(\U_\p(f)) & = \Tr(\T_\p(f)-P^{k-m}\mathbf{D}_\p( f))\\
& = \T_\p(f)-P^{k-m} \Tr(\mathbf{D}_\p( f) )\\
& = \T_\p(f)-\T_\p(f)=0\,,
\end{align*}\ 
and 
\begin{align*}
\Tr^{(\p)}(\U_\p(f)) & = \Tr^{(\p)}(\T_\p(f)-P^{k-m}\mathbf{D}_\p (f))\\
 & = P^{k-m} \Tr^{(\p)}(\Tr(\mathbf{D}_\p( f)))-P^{k-m}\Tr^{(\p)}(\mathbf{D}_\p( f))\\
 & = P^{k-m}(\Tr^{(\p)})^2(f)-P^m f= -P^m \D(f)=0\,.
\end{align*}
We got that $\U_\p(f)$ is also new and therefore the contradiction to the direct sum.
\end{proof}

\begin{rem}
Comparing the calculations in the above proof with those in \cite[Theorem 2.18]{BV4}, we immediately observe that we finally get rid of $W_\m^\m$ ($Fr(\m)$ with notations as in \cite{BV4}) in all formulas. 

In order to understand why the twist by $W_\p^{\m\p}$ works better than the one by $W_{\m\p}^{\m\p}$ we have to look at  Proposition \ref{PropKerTraces}, which reflects  the Atkin--Lehner 
action on cusps. Indeed, by Proposition \ref{ActCusps}, $W_{\m\p}^{\m\p}$ switches the cusps $1 \choose 1$ and $1\choose \pi P$, while $W_\p^{\m\p}$ moves $1\choose 1$ to  $1 \choose  P$. This means that the full Atkin--Lehner involution sends the cusp $\infty$ to the cusp zero and when we go down in level $\m$ the $W_\m^\m$ remains involved
in all calculations. Unlike $W_{\m\p}^{\m\p}$, the action of the partial involution $W_\p^{\m\p}$ is only visible at level $\m\p$ and does not tamper with computations at level $\m$. 
\end{rem}

We end this section with one more remark about the relation between the eigenvectors of $\W_\p^{\m\p}$ and those of $\U_\p$.

\begin{lem}
Let $f\in S_{k,m}^{\p-new}(\G_0(\m\p))$. Then $f$ is an eigenvector for $\U_\p$ if and only if $f$ is an eigenvector for $\W_\p^{\m\p}$.
\end{lem}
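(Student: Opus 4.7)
The plan is to exploit the key identity established inside the proof of Proposition \ref{ThmCommNew}, namely that for every $f\in S_{k,m}^{\p-new}(\G_0(\m\p))$ one has
\[  \W_\p^{\m\p}(\U_\p(f)) = -P^m f = \U_\p(\W_\p^{\m\p}(f))\,. \]
This single relation couples the two operators tightly enough that an eigenvector for either one is automatically an eigenvector for the other, with eigenvalues related by $\lambda\mu=-P^m$.

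First, I would handle the forward direction. Assume $\U_\p(f)=\lambda f$ with $f\neq 0$. Applying $\W_\p^{\m\p}$ to this equation and using the identity above gives $\lambda\,\W_\p^{\m\p}(f) = -P^m f$. The eigenvalue $\lambda$ cannot be zero: if it were, the identity $\W_\p^{\m\p}(\U_\p(f))=-P^m f$ would force $f=0$, contradicting our assumption. Hence $\W_\p^{\m\p}(f) = -P^m\lambda^{-1} f$, so $f$ is an eigenvector for $\W_\p^{\m\p}$.

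For the converse, assume $\W_\p^{\m\p}(f)=\mu f$ with $f\neq 0$. Applying $\U_\p$ and using the second half of the displayed identity yields $\mu\,\U_\p(f) = -P^m f$. To exclude $\mu=0$, observe that $(\W_\p^{\m\p})^2$ acts as the nonzero scalar $(\zeta P)^{2m-k}\in\C_\infty^*$ recorded in Section \ref{AtLeInv}, so $\mu^2\neq 0$ for any nonzero eigenvector. Therefore $\U_\p(f)=-P^m\mu^{-1}f$, proving that $f$ is an eigenvector for $\U_\p$.

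No step is a genuine obstacle: the heavy lifting was already done in Proposition \ref{ThmCommNew}, whose proof extracts the identity $\W_\p^{\m\p}\U_\p(f)=-P^m f$ from the pair of vanishings $\Tr(f)=\Tr^{(\p)}(f)=0$ defining newforms. All that remains is the elementary observation that this identity forces both eigenvalues to be nonzero and mutually determined, which yields the equivalence in a line or two each way.
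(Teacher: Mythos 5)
Your proof is correct, and it rests on the same underlying computation — equation \eqref{twist2} applied to the two vanishings $\Tr(f)=\Tr^{(\p)}(f)=0$ that define $\p$-newforms — so it is essentially the same approach as the paper's. One minor distinction worth flagging: the paper's ``simple direct computation'' from \eqref{twist2} is most naturally read as using the \emph{linear} relation $\W_\p^{\m\p}(f)=-P^{m-k}\U_\p(f)$ coming from $\Tr^{(\p)}(f)=0$, which gives the equivalence of the two eigenvector conditions in a single line with no need to argue that eigenvalues are nonzero, and also makes the sign and slope remarks in the paper immediate ($\mu=-P^{m-k}\lambda$). You instead route through the commuting identity $\W_\p^{\m\p}\U_\p(f)=-P^m f=\U_\p\W_\p^{\m\p}(f)$ extracted in Proposition \ref{ThmCommNew}, which couples the two operators multiplicatively rather than linearly; this works just as well but obliges you to verify $\lambda\neq 0$ and $\mu\neq 0$, which you do correctly (via the injectivity forced by $\W_\p^{\m\p}\U_\p=-P^m\,Id$ on newforms, and via $(\W_\p^{\m\p})^2=(\zeta P)^{2m-k}\,Id$, respectively). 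Both arguments are sound and of comparable length; the linear relation is marginally cleaner.
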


\begin{proof}
It follows easily from equation \eqref{twist2} and simple direct computations. We just add that  the eigenvalues for $\W_\p^{\m\p}$ have opposite sign with respect to those of $\U_\p$ and have slope,
i.e. $\p$-adic valuation, $m-k/2$ (because the slope for $\U_\p$-eigenforms is $k/2$ by \cite[Theorem 2.16]{BV4}).
\end{proof}

\begin{rem}
We already observed in \cite[Remark 2.17]{BV4}  that $\U_\p$ is diagonalizable on the space of $\p$-newforms (if the characteristic is odd). Now we can add that $\U_\p$ and $\W_\p^{\m\p}$ can
be diagonalized using the same basis of eigenvectors. \\
We also wonder if $\U_\p$ is diagonalizible on $\p$-oldforms (again we are subject to the condition that $p$ is odd). In \cite[Remark 2.4]{BV4} it has been observed that $\U_\p$ is diagonalizable on $S_{k,m}^{\p-old}(\G_0(\m\p))$
if and only if $\T_\p$ is diagonalizable on $S_{k,m}(\G_0(\m))$ and is injective. It is easy to see that if $f\in S_{k,m}^{\p-old}(\G_0(\m\p))$ and $\W_\p^{\m\p}(f)=\lambda f$
then it must be $\lambda = \pm P^{m-k/2}$. Therefore, $f$ is also an eigenvector for $\U_\p$ if and only of it is $\p$-new. As a consequence, the problem of $\U_\p$ being
diagonalizable on the space of $\p$-oldforms remains open.
\end{rem}

 \section{Applications}\label{SecApp}
We collect in this last section some further results as applications of twisted trace maps. In the first part, we investigate the relation between $\p$-newforms at different levels. 
In the second and last part we shall address the topic of Drinfeld $\p$-adic forms \`a la Serre.\\

We keep notations as in previous sections: $\m=(\pi)$, $\p=(P)$ with $(\pi, P)=1$ and $P$ irreducible of degree $d$.

\subsection{Some oldforms as $\p$-newforms}

We begin by showing that $S_{k,m}^{\p-new}\!(\G_0(\p))\!\subseteq \!S_{k,m}^{\p-new}\!(\G_0(\m\p))$.\\
Since we will use the traces for different levels we will go back to the original notations. Moreover, we recall that
$S_{k,m}^{\p-new}(\G_0(\p))=Ker(\Tr_1^\p)\cap Ker(\Tr_1^{\p(\p)})$ (see \cite[Section 2.2]{BV4}).

\begin{lem}\label{NewDiffLev}
Let $f\in S_{k,m}(\G_0(\p))$ be a $\p$-newform of level $\p$. Then, $\mathbf{D}_1( f), \mathbf{D}_\m(f) \in S_{k,m}(\G_0(\m\p))$ are  $\p$-newforms of level $\m\p$.
\end{lem}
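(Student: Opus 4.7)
The plan exploits $(\m,\p)=(1)$ to identify cosets across levels. The inclusion $\G_0(\m)\hookrightarrow GL_2(A)$ induces a bijection
\[ \G_0(\m\p)\backslash\G_0(\m) \xrightarrow{\sim} \G_0(\p)\backslash GL_2(A), \]
since both sides have cardinality $q^d+1$ and injectivity follows from $\G_0(\m)\cap\G_0(\p)=\G_0(\m\p)$. Hence a single set of representatives $R\subset\G_0(\m)$ serves both quotients.

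The first step is to verify that $\mathbf{D}_1(f)$ is a $\p$-newform at level $\m\p$. The untwisted trace is immediate using the common $R$ and the $\G_0(\p)$-invariance of $f$:
\[ \Tr_\m^{\m\p}(\mathbf{D}_1(f))=\sum_{\g\in R} f\mf \g=\Tr_1^\p(f)=0. \]
For the twisted trace, I would use a Bezout relation $\pi x+Py=1$ to construct $\g=\smatrix{x}{-y}{P}{\pi}\in\G_0(\p)$ and choose $W_\p^{\m\p}:=\g\,W_\p^\p$ (with $W_\p^\p=\smatrix{0}{-1}{P}{0}$); a direct check confirms that $\g W_\p^\p$ has the required Atkin--Lehner shape at level $\m\p$. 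By $\G_0(\p)$-invariance, $f\mf W_\p^{\m\p}=f\mf W_\p^\p =: g$. Since $(W_\p^\p)^2$ acts as the scalar $(\zeta P)^{2m-k}$, one verifies directly that $S_{k,m}^{\p-new}(\G_0(\p))$ is $W_\p^\p$-invariant, so $g\in S_{k,m}^{\p-new}(\G_0(\p))$; applying the first computation to $g$ yields $\Tr^{(\p)}(\mathbf{D}_1(f))=\Tr_\m^{\m\p}(g)=0$.

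For $\mathbf{D}_\m(f)$: by Lemma \ref{DW}, $\mathbf{D}_\m(f)=\W_\m^{\m\p}(\mathbf{D}_1(f))$, so it suffices to show that $S_{k,m}^{\p-new}(\G_0(\m\p))$ is $\W_\m^{\m\p}$-invariant. The product formula $W_\m^{\m\p}W_\p^{\m\p}=W_{\m\p}^{\m\p}$ (trivial scale factor since $(\m,\p)=(1)$) gives $\Tr\circ\W_\m^{\m\p}=\Tr^{(\m)}$ and $\Tr^{(\p)}\circ\W_\m^{\m\p}=\Tr^{(\m\p)}$; by \eqref{EqKerTr} these kernels equal $Ker(\Tr)$ and $Ker(\Tr^{(\p)})$ respectively, giving the invariance.

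The only technical obstacle is the matrix identification in the second step: one must verify that $\g W_\p^\p$, with the chosen $\g$, lies in the set of legitimate representatives for $W_\p^{\m\p}$, i.e., has shape $\smatrix{Pa}{b}{\pi P c}{Pd}$ with $P^2ad-\pi Pbc\in \F_q^*\cdot P$. The coprimality $(\pi,P)=1$ makes this computation direct.
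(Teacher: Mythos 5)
Your proof is correct and takes a genuinely different route from the paper's. Both arguments hinge on the same pivot, namely that for $f\in S_{k,m}(\G_0(\p))$ one has $f\mf W_\p^{\m\p}=f\mf W_\p^\p$ (which you establish by exhibiting a $\g\in\G_0(\p)$ with $\g W_\p^\p$ a legitimate representative of $W_\p^{\m\p}$, and the paper by a similar matrix identity). After that the two proofs diverge. The paper works entirely through the formula $\Tr = \mathrm{Id}+P^{-m}\,\U_\p\circ\W_\p^{\m\p}$ from Lemma \ref{LemTr}: it rewrites the level-$\p$ newform conditions as $f=-P^{-m}\U_\p(f\mf W_\p^\p)$ and $f\mf W_\p^\p=-P^{m-k}\U_\p(f)$, then substitutes these directly into the level-$\m\p$ trace and twisted trace, and for $\mathbf{D}_\m(f)=f\mf W_\m^{\m\p}$ runs the same computation a second time using commutation of $\W_\m$ with $\U_\p$ (Corollary \ref{CorComm}). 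You instead identify coset representatives for $\G_0(\m\p)\backslash\G_0(\m)$ with ones for $\G_0(\p)\backslash GL_2(A)$ to get $\Tr_\m^{\m\p}\circ\mathbf{D}_1=\Tr_1^\p$ on the nose, handle the twist by passing to the newform $g=\W_\p^\p(f)$, and then deal with $\mathbf{D}_\m(f)$ abstractly by proving $\W_\m^{\m\p}$-invariance of $S_{k,m}^{\p-new}(\G_0(\m\p))$ from Proposition \ref{PropKerTraces} and \eqref{EqKerTr} rather than by recomputing. Your route is more structural and avoids any $\U_\p$-manipulation; it also records the useful side fact that the newspace at level $\m\p$ is stable under $\W_\m^{\m\p}$. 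The paper's route keeps everything inside the Hecke-theoretic machinery it has set up elsewhere and so illustrates that machinery's utility, but does more bookkeeping. One small thing worth making explicit in your write-up: the identification of coset spaces rests on $\G_0(\m)\cap\G_0(\p)=\G_0(\m\p)$, which uses $(\m,\p)=(1)$; you do flag this, and the index count $q^d+1$ on both sides is correct since the reduction map $\G_0(\m)\to GL_2(A/\p)$ is surjective when $\p\nmid\m$.
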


\begin{proof}
Since $f\in S_{k,m}^{\p-new}(\G_0(\p))$ we have that $\Tr^\p_1(f)=0$ and $\Tr^{\p(\p)}_1=\Tr^{\p}_1(f\mf W_\p^\p)=0$, namely
\[  f=-P^{-m}\U_\p(f\mf W_\p^\p)\quad \mathrm{and}\quad f\mf W_\p^\p = -P^{m-k}\U_\p(f)\,. \]
Fix $\a,\b\in A$ such that $\a P+\b \pi=1$ and take now $\mathbf{D}_1( f)\in S_{k,m}(\G_0(\m\p))$. We have
\begin{align*}
\Tr^{\m\p}_\m(f) & = f+P^{-m}\U_\p(f\mf W_\p^{\m\p})\\
 & = f+ P^{-m}\U_\p(f\mf \matrix{P}{-\b}{\pi P}{\a P})\\
 & = f+ P^{-m}\U_\p(f\mf \matrix{\b}{1}{-\a P}{\pi} \matrix{0}{-1}{P}{0})\\
 & = f+P^{-m}\U_\p(f\mf W_\p^\p) =0
\end{align*}
(note, in particular, the relation $f\mf W^{\m\p}_\p=f\mf W^\p_\p$ for any $f\in S_{k,m}(\G_0(\p))$).
 Moreover,  
\begin{align*}
\Tr^{\m\p(\p)}_\m (f) = \Tr^{\m\p}_\m(f\mf W_\p^{\m\p})& = f\mf W_\p^{\m\p} +P^{m-k}\U_\p(f)\\
 & = f\mf W^\p_\p +P^{m-k}\U_\p(f)=0\,.
\end{align*}

As for $\mathbf{D}_\m( f)\in S_{k,m}(\G_0(\m\p))$ we first observe that thanks to Lemma \ref{DW}:
\begin{align*}
\mathbf{D}_\m( f) = f\mf \matrix{\b}{-\a}{P}{\pi} \matrix{\pi}{0}{0}{1} =f\mf W_\m^{\m\p}.
\end{align*}
Finally we have
\begin{align*}
\Tr^{\m\p}_\m (\mathbf{D}_\m( f)) & = \mathbf{D}_\m (f) +P^{-m} \U_\p (\mathbf{D}_\m (f)\mf W_\p^{\m\p})\\
 & =  f\mf W_\m^{\m\p} +P^{-m} \U_\p (f\mf W_{\m\p}^{\m\p}) \\
 & =\W^{\m\p}_\m (f+P^{-m} \U_\p (f\mf W^{\m\p}_\p)) \qquad{\rm (Lemma\ \ref{CorComm})} \\
 & =\W^{\m\p}_\m (f+P^{-m} \U_\p (f\mf W_\p^\p)) = 0; 
\end{align*}
\begin{align*}
\Tr_\m^{\m\p(\p)} (\mathbf{D}_\m (f))& =\Tr^{\m\p}_\m(\mathbf{D}_\m (f)\mf W_\p^{\m\p})\\ & = \mathbf{D}_\m( f)\mf W_\p^{\m\p} +P^{m-k} \U_\p(\mathbf{D}_\m(f))\\
&  = f\mf W^{\m\p}_{\m\p} +P^{m-k}\U_\p(f\mf W^{\m\p}_\m) \\
&  = \W^{\m\p}_\m(f\mf W^{\m\p}_\p + P^{m-k}\U_\p(f)) \qquad{\rm (Lemma\ \ref{CorComm})} \\
&  = \W^{\m\p}_\m(f\mf W^\p_\p + P^{m-k}\U_\p(f))=0. \qedhere  
\end{align*}
\end{proof}

We guess that, for dimensional reasons, at level $\m\p$ the are {\em genuine} $\p$-newforms that cannot arise from any other underlying level. It would be interesting to  further  investigate the subject.

\subsection{$\p$-adic Drinfeld modular forms}

In the paper \cite{Vi}, C. Vincent extends the notion of $p$-adic modular form introduced by Serre (see \cite{Se}) to the 
 Drinfeld setting by 
showing that Drinfeld modular forms for the group $\G_0(\p)\subseteq GL_2(\F_q[t])$ with integral $u$-expansions are also $\p$-adic modular forms. 
Here we just observe that thanks to Lemma \ref{LemTr} the same result follows for the groups $\G_0(\m\p)\subseteq \G_0(\m)$.

\begin{prop}\label{PropVinc}
Let $f\in M_{k,m}(\G_0(\m\p))$ with rational $u$-series coefficients, where $(\m,\p)=(1)$ and $\p$ is prime. Then $f$ is a $\p$-adic Drinfeld modular forms for $\G_0(\m)$. 
\end{prop}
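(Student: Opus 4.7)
My plan is to run Vincent's argument from \cite{Vi} (which handles the pair $(\G_0(\p), GL_2(A))$) verbatim at the pair $(\G_0(\m\p), \G_0(\m))$, using Lemma \ref{LemTr} in place of her trace identity. Vincent's proof rests on three ingredients: (i) a trace identity expressing $f$ as a lower-level form plus a $\U_\p$-correction; (ii) an integral level-$1$ Eisenstein series $E$ with $E\equiv 1\pmod{\p}$; and (iii) the Frobenius congruence $E^{p^n}\equiv 1\pmod{\p^{p^n}}$, arising from $(1+\p h)^{p^n}=1+\p^{p^n}h^{p^n}$ in characteristic $p$. Ingredient (i) for our pair is exactly Lemma \ref{LemTr}, while (ii) and (iii) are inherited from Vincent for free, since any level-$1$ form is automatically a form for $\G_0(\m)$.

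Concretely, by Lemma \ref{LemTr},
$f = \Tr^{\m\p}_\m(f) - P^{-m}\U_\p(f \mf W_\p^{\m\p}),$
and the first summand $\Tr^{\m\p}_\m(f)\in M_{k,m}(\G_0(\m))$ already has the target level. The task then reduces to $\p$-adically approximating the correction $-P^{-m}\U_\p(f\mf W_\p^{\m\p})$ by forms for $\G_0(\m)$. Taking Vincent's integral Eisenstein series $E$ built from the Goss series $g_d$ of Section \ref{SecEis} with $d=\deg P$, the Frobenius gives $E^{p^n}\equiv 1\pmod{\p^{p^n}}$, so multiplying any form by a suitable $E^{p^n}$ raises its weight while preserving its $u$-expansion modulo $\p^{p^n}$. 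Iterating Lemma \ref{LemTr} inside the correction term, and using Remark \ref{Integrality} to keep $\U_\p$-images $\p$-integral, produces a sequence $\{f_i\}\subset\bigoplus_{k',m'} M_{k',m'}(\G_0(\m))$ with $v_\p(f-f_i)\to\infty$, fulfilling Definition \ref{DefpAdicSerre}.

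The main obstacle is controlling the negative $\p$-valuation $-m$ introduced by the factor $P^{-m}$: one has to check that at each round of Eisenstein-multiplication-and-trace-substitution the gain of $p^n$ in $v_\p$ from $E^{p^n}$ strictly outpaces the loss of $m$, so that the iteration converges. Since this bookkeeping appears already in Vincent's original proof and uses only the abstract shape of the trace identity together with the inclusion of level-$1$ Eisenstein series into the target level, the adaptation amounts to a direct relabeling of the smaller level from $(1)$ to $\m$ rather than genuinely new content.
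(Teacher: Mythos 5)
Your high-level plan---transport Vincent's Theorem 4.1 from $(\G_0(\p),GL_2(A))$ to $(\G_0(\m\p),\G_0(\m))$ using Lemma~\ref{LemTr} as the trace identity---is exactly what the paper does, and the three ingredients you list are the right ones. However, the concrete outline you give of how those ingredients combine has a genuine gap, and as written the ``iteration'' would not converge.

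The approximating sequence in Vincent's argument (and in the paper's proof) is not produced by iterating Lemma~\ref{LemTr} inside the correction term; it is the one-shot family $f_r := \Tr_\m^{\m\p}(f\,g_{(r)})$, where $g_{(r)} := g_{(0)}^{p^r}$ and $g_{(0)}$ is a \emph{specific combination} of $g_d^n$ and its image under $\W_\p^{\m\p}$, engineered so that it enjoys two properties simultaneously: $g_{(0)}\equiv 1 \pmod{\p}$ \emph{and} $g_{(0)}\mf W_\p^{\m\p}\equiv 0\pmod{\p^{N}}$ for $N$ large. The Frobenius congruence you invoke, $g_{(0)}^{p^r}\equiv 1\pmod{\p^{p^r}}$, controls only the term $f g_{(r)} - f$. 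The other half of the estimate,
\[
\Tr(f g_{(r)}) - f g_{(r)} \;=\; P^{-m}\,\U_\p\!\bigl((f\mf W_\p^{\m\p})\cdot(g_{(r)}\mf W_\p^{\m\p})\bigr),
\]
only shrinks because $g_{(r)}\mf W_\p^{\m\p}$ is divisible by an unbounded power of $\p$ as $r\to\infty$. Multiplying by a bare power $g_d^{np^r}$ does \emph{not} achieve this: $g_d^{np^r}\mf W_\p^{\m\p} = \mathbf{D}_\p(g_d^{np^r})\equiv 1\pmod{\p}$, so the trace-defect stays of bounded $\p$-valuation, roughly $v_\p(f\mf W_\p^{\m\p})-m$, and the $P^{-m}$ never gets absorbed. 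Likewise, simply re-expanding the correction with Lemma~\ref{LemTr} at each step loses another $m$ in $v_\p$ at every round and diverges for $m>0$. So the obstacle you flag at the end is real, but the mechanism that overcomes it---the Atkin--Lehner-vanishing auxiliary form $g_{(0)}$, not the Eisenstein power alone---is precisely the ingredient missing from your outline. Once you add the construction of $g_{(0)}$ and its $W_\p^{\m\p}$-congruence, the rest of your plan (Lemma~\ref{LemTr}, Remark~\ref{Integrality}, Frobenius) goes through exactly as in the paper.
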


\begin{proof}
The proof works exactly as in \cite[Theorem 4.1]{Vi}. Therefore, we are just going to briefly outline the main points because of the different normalization used in this paper.\\
For a positive integer $n$, let $$g_{(0)}:= P^{n(q^d-1)}g_d^n-P^{2n(q^d-1)}g_d^n |_{n(q^d-1),0} W_\p^{\m\p}\in M_{n(q^d-1),0}(\G_0(\m\p)),$$ where $g_d$ is the 
Eistenstein series of weight $q^d-1$ and type 0 for $GL_2(A)$ introduced in Section \ref{SecEis} \footnote{Note that according to our convenience or purpose  $g_d=\mathbf{D}_1(g_d)$ can be seen as a modular form of level $\m$, so that $\mathbf{D}_\p(g_d)=g_d|_{q^d-1,0} W_\p^{\m\p}$ is of level $\m\p$, or directly as a form of level
$\m\p$.}.  \\
Using $g_d\equiv 1 \pmod{\p}$ (see \cite{Ge88}), Remark \ref{Integrality} and the fact that $\mathbf{D}_\p (g_d^n)=\mathbf{D}_\p(g_d^n-1)+1\equiv 1\pmod{\p}$, we get 
\[  g_{(0)} |_{n(q^d-1),0} W_\p^{\m\p} \equiv 0 \pmod{\p^{n(q^d-1)+1}}\,.\]
We now define $g_{(r)}:=g_{(0)}^{p^r}$. In the same way we obtain
\[  g_{(r)}|_{p^rn(q^d-1),0}W_\p^{\m\p}\equiv 0\pmod{\p^{np^r(q^d-1)+p^r}}\,.  \]
Consider $fg_{(r)}\in M_{k+np^r(q^d-1),m}(\G_0(\m\p))$.  Thus, $\Tr(fg_{(r)})\in M_{k+np^r(q^d-1),m}(\G_0(\m))$ and
\[ \Tr(fg_{(r)})-f = \underbrace{\Tr(fg_{(r)})- fg_{(r)}}_{A} + \underbrace{fg_{(r)}-f}_{B}\,.  \]
For the $B$ part we have 
\[  v_\p(f(g_{(r)}-1))\geqslant v_\p(f)+p^r\,, \]
while for the $A$ part (recall formula \eqref{eqTr} and Remark \ref{Integrality}) we get
\[ v_\p(\Tr(fg_{(r)})- fg_{(r)})\geqslant  p^r+p^rn(q^d-1)-m +v_\p(f\mf W_\p^{\m\p})\,. \]
Since 
\[ v_\p(\Tr(fg_{(r)})-f) \geqslant \min \{ p^r+v_\p(f) , p^r+p^rn(q^d-1)-m +v_\p(f\mf W_\p^{\m\p})\}\,, \]
we get our claim. Indeed, this shows that $f$ is a $\p$-adic modular form of level $\G_0(\m)$, because the sequence $\{\Tr(fg_{(r)})\}$ satisfies the requirements of Definition \ref{DefpAdicSerre}.
\end{proof}

\begin{rems}\begin{enumerate}
\item {A similar argument was used in \cite{DK} to prove interesting $\mathrm{mod}\ {\p}$ congruences between cusp form of different levels.}
\item {For the sake of completeness, we have also to mention that  more relevant progresses in the construction of families of Drinfeld modular forms can be found in the works of S. Hattori (see \cite{H}) and M.-H. Nicole and G. Rosso (\cite{NR1} and \cite{NR2}).}
\end{enumerate}
\end{rems}

\end{document}